\documentclass[11pt]{article}

\usepackage{amsmath,amssymb,amsthm,amsfonts,bm,geometry,graphicx,hyperref}
\numberwithin{equation}{section}
\newcommand{\figureprefix}{}
\IfFileExists{nn.pdf}{}{%
  \renewcommand{\figureprefix}{arxiv/}%
}

\usepackage{subcaption}

\newtheorem{thm}{Theorem}[section]

\newcommand{\R}{\mathbb{R}}

\newcommand{\vect}[1]{\boldsymbol{#1}}
\newcommand{\half}{\frac{1}{2}}
\newcommand{\diag}{\operatorname{diag}}
\makeatletter
\renewcommand{\@maketitle}{%
  \newpage
  \null
  \vskip 2em%
  \begin{center}%
  {\Large \@title \par}%
  \vskip 1.5em%
  {\large
    \lineskip .5em%
    \begin{tabular}[t]{c}%
      \@author
    \end{tabular}\par}%
  \vskip 1em%
  {\large \@date}%
  \end{center}%
  \par
  \vskip 1.5em}
\makeatother

\title{Machine learning moment closure models for the radiative transfer equation IV: enforcing symmetrizable hyperbolicity in two dimensions}

\author{Juntao Huang\thanks{Department of Mathematical Sciences, University of Delaware, Newark, DE, 19716, USA. (\texttt{huangjt@udel.edu}). Research was partially supported by the National Science Foundation (DMS-2309655 and DMS-2618114). Part of this research was performed while the author was visiting the Institute for Pure and Applied Mathematics (IPAM) at the University of California, Los Angeles, which is supported by the National Science Foundation (DMS-1925919).}}

\date{\today}

\begin{document}
\maketitle

\begin{abstract}
\normalsize

This is our fourth work in the series on machine learning (ML) moment closure models for the radiative transfer equation (RTE). 
In the first three papers of this series, we considered the RTE in slab geometry in 1D1V (i.e. one dimension in physical space and one dimension in angular space), and introduced a gradient-based ML moment closure \cite{huang2022mlmc1}, then enforced the hyperbolicity through a symmetrizer \cite{huang2021mlmc2}, or together with physical characteristic speeds by learning the eigenvalues of the Jacobian matrix \cite{huang2022mlmc3}.

Here, we extend our framework to the RTE in 2D2V (i.e. two dimensions in physical space and two dimensions in angular space). The main idea is to preserve the leading part of the classical $P_N$ model and modify only the highest-order block row. By analyzing the structural properties of the $P_N$ model, we show that its coefficient matrices are symmetric and admit a block-tridiagonal structure. Then we use this property to introduce a block-diagonal symmetrizer for the ML moment model and derive explicit algebraic conditions on the closure blocks which guarantee the symmetrizable hyperbolicity of the resulting ML system. These conditions lead to a natural parametrization of the closure in terms of a symmetric positive definite matrix together with symmetric closure blocks, which can be learned from data while automatically enforcing symmetrizable hyperbolicity by construction. The numerical results show that the proposed framework improves upon the classical $P_N$ model while maintaining hyperbolicity.
\end{abstract}

\noindent\textbf{Keywords:} radiative transfer equation; moment closure; machine learning; symmetrizable hyperbolicity; spherical harmonics; neural networks

\section{Introduction}

The radiative transfer equation (RTE) describes particle propagation and interaction with a background medium. It has been widely applied in many fields of science and engineering, including astrophysics \cite{pomraning2005equations}, heat transfer \cite{koch2004evaluation}, and optical imaging \cite{klose2002optical}.
Because the unknown function in the RTE (also called the specific intensity) depends on time, physical space, and angular variables, direct simulation of the kinetic equation remains computationally demanding due to the curse of dimensionality \cite{huang2024adaptive,peng2021reduced,einkemmer2025review}.

As one approach to reduce the computational cost, moment methods track a finite number of moments of the specific intensity. However, the evolution of the moments always depends on the next higher-order moments, leading to the moment closure problem \cite{grad1949kinetic}. Many closure models have been developed, including the $P_N$ model \cite{chandrasekhar1944radiative}, variable Eddington factor models \cite{levermore1984relating,murchikova2017analytic}, entropy-based $M_N$ models \cite{hauck2011high,alldredge2012high,alldredge2014adaptive}, positive $P_N$ models \cite{hauck2010positive}, filtered $P_N$ models \cite{mcclarren2010robust,laboure2016implicit,radice2013new}, $B_2$ models \cite{alldredge2016approximating}, and $MP_N$ models \cite{fan2020nonlinear,fan2020nonlinear2}, to name a few.

Thanks to the rapid development of machine learning (ML) and data-driven modeling \cite{lecun2015deep,brunton2016discovering,raissi2019physics,han2018solving}, ML-based moment closure has become an active research direction. In \cite{han2019uniformly}, the authors introduced a framework that learns a moment closure model for the BGK equation. Other works include stable ML closures based on the conservation-dissipation formalism \cite{huang2020learning,zhu2015conservation}, nonlocal closures for the Vlasov-Poisson system \cite{bois2020neural}, neural network approximations of Landau fluid closures \cite{ma2020machine,wang2020deep,maulik2020neural}, and WSINDy closures for thermal radiation transport \cite{messenger2025learning}. In parallel, ML has also been used to solve or accelerate high-dimensional kinetic equations directly, including BGK models \cite{lou2020physics}, phonon Boltzmann equations \cite{li2021physics}, the RTE \cite{mishra2020physics}, Boltzmann collision operators \cite{xiao2020using,miller2022neural}, and learning collision operators from molecular dynamics \cite{zhao2025data,zhao2026molecular}. More recently, structure-preserving surrogates for entropy-based closures have been developed using convex splines and input convex neural networks \cite{hauck2021entropy,hauck2021boltzmann,amos2017input,schotthofer2025structure}, providing a different route to combining learning with mathematical structure. These works have shown the promise of ML closures for kinetic equations, but they also indicate that careful design is needed to ensure that the learned closure preserves the structural properties of the underlying PDE. For example, if the learned closure breaks hyperbolicity, the resulting system can lose long-time stability and generate unphysical solutions, even if it fits the training data well. Hence, it is important to incorporate structure preservation into the ML closure design.

In our previous work, we proposed a structure-preserving ML moment closure framework for the RTE in slab geometry \cite{huang2022mlmc1,huang2021mlmc2,huang2022mlmc3}. In the first work \cite{huang2022mlmc1}, motivated by the exact free-streaming closure, we proposed to learn the gradient of the unclosed moment rather than the moment itself \cite{huang2022mlmc1}. This gradient-based ansatz also provides a natural output normalization. In the second work \cite{huang2021mlmc2}, we enforced the hyperbolicity by constructing a symmetrizer for the ML closure system. In the third work \cite{huang2022mlmc3}, we made use of the lower Hessenberg structure of the coefficient matrix to approximate the eigenvalues by neural networks with additional control of characteristic speeds. This framework was extended to the BGK equations in \cite{christlieb2025hyperbolic} and the linear Boltzmann equation with uncertainties in \cite{huang2026machine}.
Together, these works showed that accurate ML closures can be built only when the neural network architecture is carefully designed to preserve the structural properties of the PDE.

The purpose of this paper is to extend our framework from the radiative transfer equation (RTE) in slab geometry, namely the 1D1V setting (one physical dimension and one angular dimension), to the RTE in the 2D2V setting (two physical dimensions and two angular dimensions). This extension is highly nontrivial. In the 1D case, the hyperbolicity of the moment system is determined by the eigenstructure of a single coefficient matrix. In 2D, however, one must require that any linear combination of the coefficient matrices in the $x$- and $y$-directions is diagonalizable over $\mathbb{R}$, which is a substantially stronger and more delicate condition.
In addition, the algebraic structures of the coefficient matrices in the 2D system become much more complicated than in 1D. In the 1D setting, moments are ordered by the degree of the Legendre polynomial basis, and the coefficient matrix has a lower Hessenberg form, making it possible to construct closures by modifying only the last row. In 2D, however, moments are organized by both degree and order of the spherical harmonics basis, and the coefficient matrices exhibit a block structure indexed by degree. The resulting coupling pattern is much more intricate: each moment interacts with multiple neighboring moments within the same degree as well as across adjacent degrees. As a consequence, the closure strategies developed in 1D do not directly extend to 2D.

In this work, we address these challenges by first identifying the structural properties of the real-valued $P_N$ system in 2D. We show that the coefficient matrices of the $P_N$ model are symmetric and block tridiagonal, with blocks indexed by degree. This structure makes it possible to preserve the leading part of the $P_N$ system and modify only the highest-order block row, which forms the foundation of our ML closure construction. We then introduce a block-diagonal symmetrizer and derive explicit algebraic conditions on the closure blocks that ensure symmetrizable hyperbolicity of the resulting system. This perspective leads to a natural parametrization of the ML closure in terms of a symmetric positive definite matrix together with symmetric closure blocks, which can be represented by neural networks while automatically enforcing the required structural constraints by construction.
Our ML moment closure is trained through a residual-based loss function for the highest-order moment equation. Numerical experiments are conducted on a series of tasks which clearly show that the proposed method can improve on the classical $P_N$ model when trained on kinetic reference data. 

The remainder of the paper is organized as follows. In Section~\ref{sec:pn-2d}, we derive the real-valued $P_N$ system in 2D and analyze its structural properties. In Section~\ref{sec:hyperbolic-ml-closure}, we introduce the hyperbolic ML closure framework with its constrained parametrization. Section~\ref{sec:numerical-experiments} reports numerical experiments. Finally, we conclude in Section~\ref{sec:conclusion} with a discussion of future work.

\section{The RTE and the \texorpdfstring{$P_N$}{PN} model in 2D}\label{sec:pn-2d}

In this section, we begin from the RTE and derive the two-dimensional
$P_N$ system. Then we analyze the structural properties of the $P_N$ system, which will later guide the ML closure construction. In particular, we will see that the coefficient matrices in the $P_N$ system are symmetric and admit a block-tridiagonal structure after grouping the moments by polynomial degree.

We consider the RTE in two spatial dimensions with isotropic scattering:
\begin{equation}\label{eq:rte}
\partial_t \psi + \Omega_x\partial_x\psi + \Omega_y\partial_y\psi
= \sigma_s\left(\frac{1}{4\pi}\int_{\mathbb S^2}\psi(t,x,y,\Omega')\,d\Omega' - \psi\right) - \sigma_a\psi,
\end{equation}
for $(t,x,y)\in \mathbb R_+\times\mathbb R^2$ and $\Omega=(\Omega_x,\Omega_y,\Omega_z)\in \mathbb S^2$. The angular variable $\Omega$ can also be parameterized by
\begin{equation}
    \Omega=(\sqrt{1-\mu^2}\cos\varphi,\,\sqrt{1-\mu^2}\sin\varphi,\,\mu), \qquad \mu\in[-1,1],\ \varphi\in[0,2\pi).
\end{equation}
Here $\psi = \psi(t,x,y,\Omega)$ is the specific intensity, $\sigma_s = \sigma_s(x,y) \ge 0$ is the scattering cross section, and $\sigma_a = \sigma_a(x,y) \ge 0$ is the absorption cross section. The collision operator on the right hand side is isotropic, since the scattering redistributes particles uniformly in all directions.

We assume the problem is planar, so that the coefficients are independent of the third spatial variable and the solution is even with respect to $\mu=\Omega_z$:
\[
\psi(t,x,y,\mu,\varphi)=\psi(t,x,y,-\mu,\varphi).
\]
This symmetry is preserved by \eqref{eq:rte}, since both transport coefficients $\Omega_x$, $\Omega_y$ and the isotropic collision operator are even in $\mu$.

To derive the $P_N$ system, we first introduce a set of real-valued orthonormal basis of $L^2(\mathbb S^2)$. Define the real-valued spherical harmonics by
\begin{align}
\Phi_l^0(\mu,\varphi) &:= N_l^0 P_l(\mu), \qquad l\ge 0, \\
\Phi_l^{m,\mathrm c}(\mu,\varphi) &:= \sqrt{2}\,N_l^m P_l^m(\mu)\cos(m\varphi), \qquad 1\le m\le l, \\
\Phi_l^{m,\mathrm s}(\mu,\varphi) &:= \sqrt{2}\,N_l^m P_l^m(\mu)\sin(m\varphi), \qquad 1\le m\le l.
\end{align}
Here $P_l$ and $P_l^m$ denote the Legendre and associated Legendre polynomials and the constants $N_l^m := \sqrt{\frac{2l+1}{4\pi}\frac{(l-m)!}{(l+m)!}}$. By the orthogonality of the (associated) Legendre polynomials in $\mu$ and the orthogonality of $\{1, \cos(m\varphi), \sin(m\varphi)\}$ in $\varphi$, the family
\begin{equation}
    \bigl\{\Phi_l^0,\ \Phi_l^{m,\mathrm c},\ \Phi_l^{m,\mathrm s}: l\ge 0,\ 1\le m\le l\bigr\}
\end{equation}
forms an orthonormal basis of $L^2(\mathbb S^2)$; see the details in Theorem \ref{thm:real-sph-harm-orthonormal} in the appendix. In addition, the parity relations $P_l(-\mu) = (-1)^l P_l(\mu)$ and $P_l^m(-\mu)=(-1)^{l+m}P_l^m(\mu)$ imply
\begin{equation}
\Phi_l^0(-\mu,\varphi)=(-1)^l\Phi_l^0(\mu,\varphi),
\qquad
\Phi_l^{m,\mathrm c/\mathrm s}(-\mu,\varphi)=(-1)^{l+m}\Phi_l^{m,\mathrm c/\mathrm s}(\mu,\varphi).
\end{equation}
Hence, for an even function $\psi$ of $\mu$, we introduce the truncated space
\begin{equation}
V_N := \mathrm{span}\Bigl(\{\Phi_l^0: 0\le l\le N,\ l\ \text{even}\}
\cup \{\Phi_l^{m,\mathrm c},\ \Phi_l^{m,\mathrm s}: 0\le l\le N,\ 1\le m\le l,\ l+m\ \text{even}\}\Bigr),
\end{equation}
and approximate $\psi$ by
\begin{equation}\label{eq:real-expansion}
\psi_N(t,x,y,\Omega)
=
\sum_{\substack{0\le l\le N \\ l\ \mathrm{even}}} R_l^0(t,x,y)\,\Phi_l^0(\Omega)
+
\sum_{l=1}^N\sum_{\substack{1\le m\le l \\ l+m\ \mathrm{even}}}
\Bigl(
R_l^m(t,x,y)\,\Phi_l^{m,\mathrm c}(\Omega)
+
I_l^m(t,x,y)\,\Phi_l^{m,\mathrm s}(\Omega)
\Bigr),
\end{equation}
where the coefficients are the moments of $\psi$ in the corresponding basis functions:
\begin{equation}
R_l^0 = \langle \psi,\Phi_l^0\rangle,
\qquad
R_l^m = \langle \psi,\Phi_l^{m,\mathrm c}\rangle,
\qquad
I_l^m = \langle \psi,\Phi_l^{m,\mathrm s}\rangle,
\end{equation}
with $\langle \cdot \rangle$ the inner product in $L^2(\mathbb S^2)$.
For each degree $l$, the number of moments is $l+1$. We order them as $\vect{u}_l\in\mathbb R^{l+1}$:
\begin{equation}
\vect{u}_l=
\begin{cases}
(R_l^0,\,R_l^2,\,I_l^2,\,\dots,\,R_l^l,\,I_l^l)^T, & l\ \text{even},\\[0.3em]
(R_l^1,\,I_l^1,\,R_l^3,\,I_l^3,\,\dots,\,R_l^l,\,I_l^l)^T, & l\ \text{odd},
\end{cases}
\end{equation}
and collect all the moments in
\begin{equation}
\vect{u}=(\vect{u}_0,\vect{u}_1,\dots,\vect{u}_N)^T\in\mathbb R^{\frac{1}{2}(N+1)(N+2)}.
\end{equation}

For $l\ge 0$, let $V_l\subset V_N$ denote the span of the basis functions of degree $l$. Using the recursion relations for associated Legendre polynomials together with the trigonometric identities,
one finds that multiplication by $\Omega_x=\sqrt{1-\mu^2}\cos\varphi$ and $\Omega_y=\sqrt{1-\mu^2}\sin\varphi$ changes the degree only from $l$ to $l\pm1$ and the order only from $m$ to $m\pm1$. Hence, we have the degree coupling property
\begin{equation}\label{eq:degree-coupling}
\Omega_x V_l \subset V_{l-1}\oplus V_{l+1},
\qquad
\Omega_y V_l \subset V_{l-1}\oplus V_{l+1}.
\end{equation}

Projecting \eqref{eq:rte} onto $V_N$ yields the real-valued $P_N$ system
\begin{equation}\label{eq:real-pn}
    \partial_t \vect{u} + \mathcal{A} \, \partial_x\vect{u} + \mathcal{B} \, \partial_y\vect{u} = \mathcal{Q}\vect{u},
\end{equation}
where the coefficient matrices $\mathcal{A}$ and $\mathcal{B}$ are generated by the transport terms $\Omega_x$ and $\Omega_y$: 
\begin{equation}\label{eq:coefficient-matrices-def}
(\mathcal{A})_{\alpha\beta}=\langle \Omega_x\Phi_\beta,\Phi_\alpha\rangle,
\qquad
(\mathcal{B})_{\alpha\beta}=\langle \Omega_y\Phi_\beta,\Phi_\alpha\rangle.
\end{equation}
Here $\mathcal{Q}=\mathcal{Q}(x,y)$ is the diagonal collision matrix derived from absorption and isotropic scattering on the right hand side of \eqref{eq:rte}:
\begin{equation}\label{eq:collision-matrix}
\mathcal{Q}=\diag(Q_0,Q_1,\dots,Q_N),
\qquad
Q_0=-\sigma_a,
\qquad
Q_l=-(\sigma_a+\sigma_s)I_{l+1}, \quad 1\le l\le N.
\end{equation}

By the definition of $\mathcal{A}$ and $\mathcal{B}$ in \eqref{eq:coefficient-matrices-def}, one can easily show that they are symmetric matrices. Moreover, by the degree coupling property in \eqref{eq:degree-coupling}, the degree ordering $\vect{u}=(\vect{u}_0,\dots,\vect{u}_N)^T$ gives the block tridiagonal form
\begin{equation}\label{eq:block-matrices}
\mathcal{A} = \half
\begin{pmatrix}
0 & A_0^T \\
A_0 & 0 & A_1^T \\
& A_1 & 0 & A_2^T \\
& & \ddots & \ddots & \ddots \\
& & & A_{N-2} & 0 & A_{N-1}^T \\
& & & & A_{N-1} & 0
\end{pmatrix},
\quad
\mathcal{B} = \half
\begin{pmatrix}
0 & B_0^T \\
B_0 & 0 & B_1^T \\
& B_1 & 0 & B_2^T \\
& & \ddots & \ddots & \ddots \\
& & & B_{N-2} & 0 & B_{N-1}^T \\
& & & & B_{N-1} & 0
\end{pmatrix},
\end{equation}
with
\begin{equation}
A_l,B_l\in\mathbb R^{(l+2)\times(l+1)}, \quad 0\le l\le N-1.
\end{equation}
Equivalently, for $1\le l\le N-1$ the moment of degree $l$ satisfies
\begin{equation}\label{eq:block-equation}
\partial_t\vect{u}_l +\half A_{l-1}\partial_x\vect{u}_{l-1} +\half A_{l}^T\partial_x\vect{u}_{l+1} +\half B_{l-1}\partial_y\vect{u}_{l-1} +\half B_{l}^T\partial_y\vect{u}_{l+1} = Q_l\vect{u}_l.
\end{equation}
This indicates that the evolution of the moments of degree $l$ only depends on the moments of degree $l-1$, $l$, and $l+1$. The explicit form of the blocks $A_l$ and $B_l$ can be derived from the recursion relations for associated Legendre polynomials. We omit the details here since they are not needed for the ML closure, and refer the readers to \cite{seibold2014starmap} for more details.

Therefore, the $P_N$ system \eqref{eq:real-pn} is hyperbolic with the coefficient matrices $\mathcal{A}$ and $\mathcal{B}$ in \eqref{eq:block-matrices} symmetric and block tridiagonal. This is the starting point for the ML closure construction in the next section.

\section{Hyperbolic ML moment closure}\label{sec:hyperbolic-ml-closure}

Our ML closure idea is to preserve the head of the real-valued $P_N$ system and modify only the last block row. This is equivalent to modifying only the evolution equation for the highest-degree moment $\vect{u}_N$ while keeping the equations for $\vect{u}_0,\dots,\vect{u}_{N-1}$ unchanged. The motivation is that the evolution equations of the lower-degree moments are exact and only approximation are introduced in the highest-degree moment equation in the $P_N$ system. We therefore introduce the matrices for the ML closure as follows:
\begin{equation}\label{eq:ml-closure-matrix}
\begin{aligned}
\mathcal{A}^{\textrm{ML}} &=
\half
\begin{pmatrix}
0 & A_0^T \\
A_0 & 0 & A_1^T \\
& A_1 & 0 & A_2^T \\
& & \ddots & \ddots & \ddots \\
& & & A_{N-2} & 0 & A_{N-1}^T \\
G_0 & G_1 & \cdots & G_{N-2} & G_{N-1} & G_N
\end{pmatrix},
\\[0.5ex]
\mathcal{B}^{\textrm{ML}} &=
\half
\begin{pmatrix}
0 & B_0^T \\
B_0 & 0 & B_1^T \\
& B_1 & 0 & B_2^T \\
& & \ddots & \ddots & \ddots \\
& & & B_{N-2} & 0 & B_{N-1}^T \\
K_0 & K_1 & \cdots & K_{N-2} & K_{N-1} & K_N
\end{pmatrix},
\end{aligned}
\end{equation}
where the matrices $G_j,\ K_j \in \R^{(N+1)\times(j+1)}$ for $j=0,\dots,N$ are closure terms to be learned from data. Generally, they may depend nonlinearly on the moment vector $\vect{u}$, which will be approximated by neural networks. The corresponding ML moment closure system is
\begin{equation}\label{eq:closed-system}
    \partial_t \vect{u} + \mathcal{A}^{\textrm{ML}} \partial_x \vect{u} + \mathcal{B}^{\textrm{ML}} \partial_y \vect{u} = \mathcal{Q}\vect{u}.
\end{equation}

The $P_N$ system \eqref{eq:real-pn} is symmetric hyperbolic, with symmetrizer given by the identity matrix. To preserve the symmetrizable hyperbolicity for the new system \eqref{eq:closed-system}, we seek a block-diagonal symmetrizer
\begin{equation}
    \mathcal{S} = \diag(I_1,I_2,\dots,I_{N-1},H), \quad H\in \R^{(N+1)\times(N+1)}.
\end{equation}
Here $I_r$ denotes the $r\times r$ identity matrix and $H$ is a symmetric positive definite (SPD) matrix to be determined. The requirement is that $\mathcal{S}\mathcal{A}^{\textrm{ML}}$ and $\mathcal{S}\mathcal{B}^{\textrm{ML}}$ are both symmetric. To achieve this, we compute 
\begin{equation}
\mathcal{S}\mathcal{A}^{\textrm{ML}}
=
\half
\begin{pmatrix}
0 & A_0^T \\
A_0 & 0 & A_1^T \\
& A_1 & 0 & A_2^T \\
& & \ddots & \ddots & \ddots \\
& & & A_{N-2} & 0 & A_{N-1}^T \\
H G_0 & H G_1 & \cdots & H G_{N-2} & H G_{N-1} & H G_N
\end{pmatrix}.
\end{equation}
The upper left $N\times N$ block of $\mathcal{S}\mathcal{A}^{\textrm{ML}}$ is exactly the same as that of the original $P_N$ matrix $\mathcal{A}$, hence it is already symmetric. Therefore, to make $\mathcal{S}\mathcal{A}^{\textrm{ML}}$ symmetric, it is sufficient to compare the last block row and the last block
column, which leads to the following conditions for the closure terms. First, we have
\begin{equation}
    H G_j = 0, \quad 0\le j\le N-2.
\end{equation}
which implies that $G_j=0$ for $0\le j\le N-2$ due to the invertibility of $H$.
Second, we have
\begin{equation}
H G_{N-1} = A_{N-1},
\end{equation}
or equivalently,
\begin{equation}
G_{N-1}=H^{-1}A_{N-1}.
\end{equation}
And lastly, we have
\begin{equation}
H G_N = (H G_N)^T.
\end{equation}

Applying the same argument to $\mathcal{B}^{\textrm{ML}}$, we have
\begin{equation}
\mathcal{S}\mathcal{B}^{\textrm{ML}}
=
\half
\begin{pmatrix}
0 & B_0^T \\
B_0 & 0 & B_1^T \\
& B_1 & 0 & B_2^T \\
& & \ddots & \ddots & \ddots \\
& & & B_{N-2} & 0 & B_{N-1}^T \\
H K_0 & H K_1 & \cdots & H K_{N-2} & H K_{N-1} & H K_N
\end{pmatrix},
\end{equation}
and therefore
\begin{equation}
K_0=\cdots=K_{N-2}=0,
\qquad
K_{N-1}=H^{-1}B_{N-1},
\qquad
H K_N=(H K_N)^T.
\end{equation}

We summarize the characterization of the hyperbolicity constraints for the ML closure as follows:
\begin{thm}
    The ML moment closure model \eqref{eq:closed-system} with the coefficients
    given by \eqref{eq:ml-closure-matrix} is symmetrizable hyperbolic if and
    only if 
    \begin{equation}            
        G_j=0,\qquad K_j=0,\qquad 0\le j\le N-2,
    \end{equation}
    and any one of the following equivalent conditions holds:
    \begin{enumerate}
        \item There exists an SPD matrix $H$ such that
        \begin{equation}\label{eq:closure-constraints-1}
            G_{N-1}=H^{-1}A_{N-1},
            \qquad
            K_{N-1}=H^{-1}B_{N-1},
            \qquad
            HG_N=(HG_N)^T,
            \qquad
            HK_N=(HK_N)^T.
        \end{equation}

        \item There exist an SPD matrix $H$ and symmetric matrices $M_x$ and
        $M_y$ such that
        \begin{equation}\label{eq:closure-constraints-2}
            G_{N-1}=H^{-1}A_{N-1},
            \qquad
            K_{N-1}=H^{-1}B_{N-1},
            \qquad
            G_N = H^{-1}M_x,
            \qquad
            K_N = H^{-1}M_y.
        \end{equation}

        \item There exist an SPD matrix $H$ and symmetric matrices $M_x$ and
        $M_y$ such that
        \begin{equation}\label{eq:closure-constraints-3}
            G_{N-1}=H A_{N-1},
            \qquad
            K_{N-1}=H B_{N-1},
            \qquad
            G_N = H M_x,
            \qquad
            K_N = H M_y.
        \end{equation}
    \end{enumerate}
    In the implementation, we use the formulation
    \eqref{eq:closure-constraints-3}.
\end{thm}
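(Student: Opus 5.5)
The plan is to read "symmetrizable hyperbolic" in the sense used throughout this section: there is a symmetrizer of the block-diagonal form $\mathcal{S}=\diag(I_1,\dots,I_{N-1},H)$ with $H$ SPD such that $\mathcal{S}\mathcal{A}^{\textrm{ML}}$ and $\mathcal{S}\mathcal{B}^{\textrm{ML}}$ are symmetric. Once this symmetrizer exists, $\xi_x\mathcal{A}^{\textrm{ML}}+\xi_y\mathcal{B}^{\textrm{ML}}$ is similar to the symmetric matrix $\mathcal{S}^{1/2}(\xi_x\mathcal{A}^{\textrm{ML}}+\xi_y\mathcal{B}^{\textrm{ML}})\mathcal{S}^{-1/2}$. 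Indeed, this equals $\mathcal{S}^{-1/2}\bigl(\mathcal{S}(\xi_x\mathcal{A}^{\textrm{ML}}+\xi_y\mathcal{B}^{\textrm{ML}})\bigr)\mathcal{S}^{-1/2}$. Hence it is real diagonalizable for every direction $(\xi_x,\xi_y)$, so hyperbolicity follows.

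The equivalence with the vanishing conditions and item 1 is exactly the block computation preceding the statement. The upper-left part of $\mathcal{S}\mathcal{A}^{\textrm{ML}}$ is the symmetric $P_N$ block. Requiring symmetry therefore amounts to matching the last block row $(HG_0,\dots,HG_N)$ with the transpose of the last block column $(0,\dots,0,A_{N-1}^T,\cdot)^T$. This gives $HG_j=0$ for $j\le N-2$, hence $G_j=0$ by invertibility of $H$. It also gives $HG_{N-1}=A_{N-1}$ and $HG_N$ symmetric. The same argument for $\mathcal{B}^{\textrm{ML}}$ gives the $K$ conditions. For the converse, substituting these conditions back makes both products visibly symmetric. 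Hence "symmetrizable with a block-diagonal symmetrizer of this form" is equivalent to the vanishing conditions together with item 1.

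Next, item 1 is equivalent to item 2 by setting $M_x:=HG_N$ and $M_y:=HK_N$. In one direction these are symmetric by item 1. Conversely, $G_N=H^{-1}M_x$ gives $HG_N=M_x$, which is symmetric. The $G_{N-1},K_{N-1}$ conditions coincide.

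Finally, item 2 is equivalent to item 3 by reparametrization. Given an SPD $H$ as in item 2, set $\tilde H:=H^{-1}$, which is again SPD. Then item 2 reads $G_{N-1}=\tilde H A_{N-1}$ and $G_N=\tilde H M_x$, with the same symmetric $M_x,M_y$, which is item 3 with $\tilde H$ in place of $H$. The reverse direction is identical, since inversion is a bijection on SPD matrices.

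The only delicate point is the "only if" direction: the statement restricts to symmetrizers of this block-diagonal shape. I would state explicitly that symmetrizable hyperbolicity is understood with respect to such $\mathcal{S}$, since the preceding discussion defines the sought symmetrizer this way. A fully general symmetrizer would require a separate argument that is not attempted here.
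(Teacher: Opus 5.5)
Your proposal is correct and follows the paper's proof step for step. It compares the last block row and last block column of $\mathcal{S}\mathcal{A}^{\textrm{ML}}$ and $\mathcal{S}\mathcal{B}^{\textrm{ML}}$ to get the vanishing conditions and item 1, sets $M_x:=HG_N$, $M_y:=HK_N$ for item 2, and uses $H\mapsto H^{-1}$ for item 3.

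Your explicit restriction to symmetrizers of the form $\diag(I,\dots,I,H)$ is exactly what the paper's argument assumes implicitly, and it is genuinely needed for the ``only if'' direction. Already for $N=2$, a non-block-diagonal SPD symmetrizer symmetrizes a closure with $G_0\neq 0$; one example is a small symmetric perturbation of the identity that couples degree $2$ to degree $1$.
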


\begin{proof}
The vanishing conditions for $G_j$ and $K_j$ with $0\le j\le N-2$, together with \eqref{eq:closure-constraints-1}, are exactly the relations obtained above by comparing the last block row and last block column of $\mathcal{S}\mathcal{A}^{\textrm{ML}}$ and $\mathcal{S}\mathcal{B}^{\textrm{ML}}$.

Condition \eqref{eq:closure-constraints-2} is just a rewriting of \eqref{eq:closure-constraints-1}, if we set
\begin{equation}    
    M_x:=HG_N, \qquad M_y:=HK_N,
\end{equation}
then $M_x$ and $M_y$ are symmetric when $HG_N$ and $HK_N$ are symmetric, and conversely \eqref{eq:closure-constraints-1} follows from
\eqref{eq:closure-constraints-2} by substituting $HG_N=M_x$ and $HK_N=M_y$.

Finally, \eqref{eq:closure-constraints-2} and \eqref{eq:closure-constraints-3}
are equivalent after the change of variable $\widetilde H=H^{-1}$ and the
relabeling $\widetilde H\mapsto H$, since the inverse of an SPD matrix is also
SPD.
\end{proof}

To guarantee the constraints of hyperbolicity in \eqref{eq:closure-constraints-3}, we may set
\begin{equation}
H = LL^T + \varepsilon I,
\end{equation}
where $L\in \R^{(N+1)\times(N+1)}$ is a lower triangular matrix and $\varepsilon>0$ is a small constant to ensure the positive definiteness of $H$. Then $H$ is always an SPD matrix for any $L$. To define $M_x$ and $M_y$, we take the symmetric part of arbitrary matrices $L_x$ and $L_y$:
\begin{equation}
    M_x = \half(L_x + L_x^T),
    \qquad
    M_y = \half(L_y + L_y^T),
\end{equation}
Note that $L$, $L_x$, and $L_y$ will be taken as nonlinear matrix-valued functions of the moment vector $\vect{u}$:
\begin{equation}
L = L(\vect{u}),\qquad
L_x = L_x(\vect{u}),\qquad
L_y = L_y(\vect{u}),
\end{equation}
which will be parameterized by multilayer perceptrons (MLPs). With this parametrization, the hyperbolicity constraints are automatically satisfied for any choice of the parameters in $L$, $L_x$, and $L_y$. Therefore, the closure terms $G_{N-1}$, $K_{N-1}$, $G_N$, and $K_N$ can be learned from data without worrying about the loss of hyperbolicity. The neural network architecture is illustrated in Fig.~\ref{fig:nn-architecture}.

\begin{figure}[ht]
\centering
{
\includegraphics[width=0.7\textwidth,trim={238pt 226pt 257pt 134pt},clip]{\figureprefix 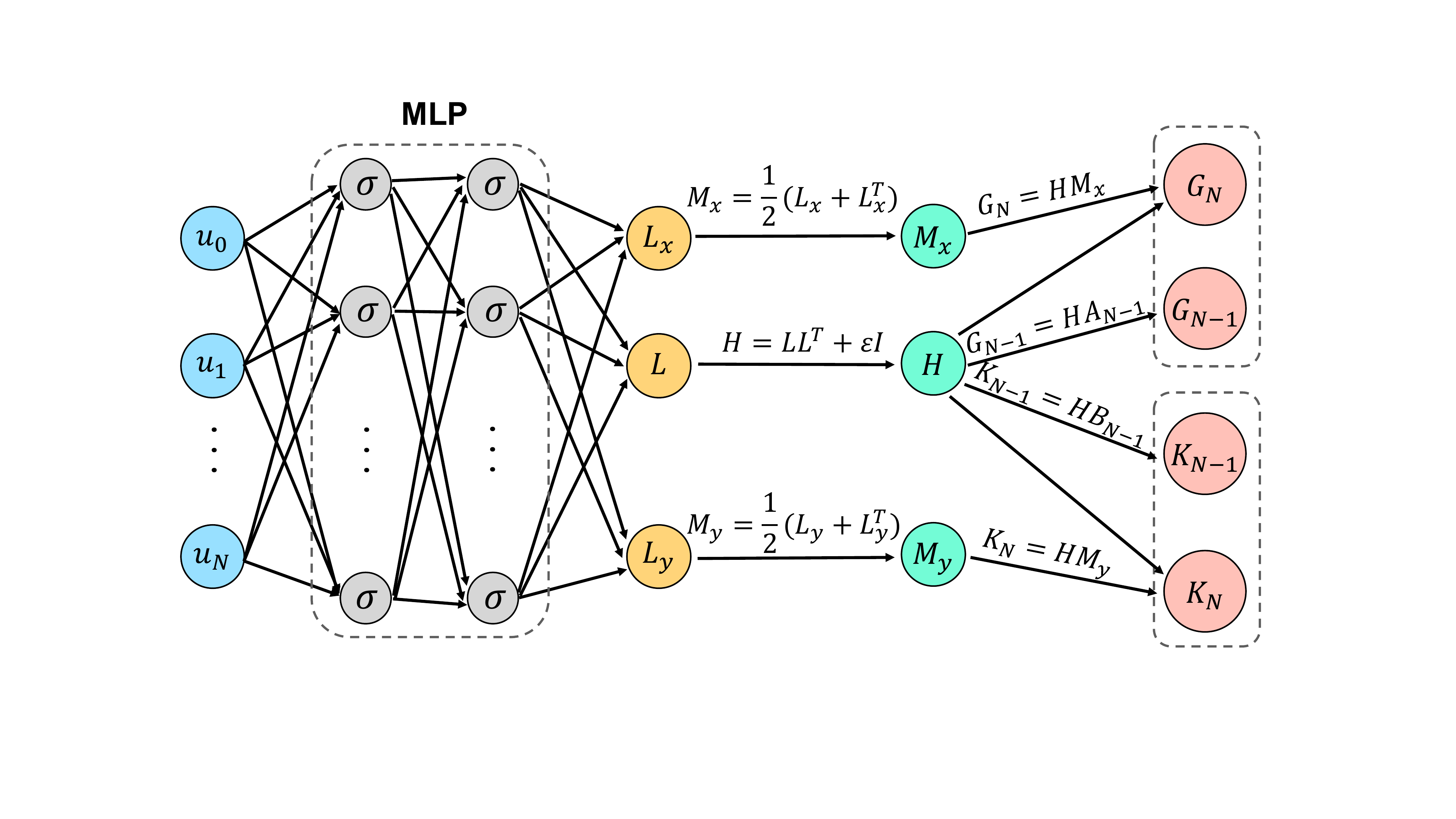}
}
\caption{Architecture of the hyperbolicity-preserving neural network closure model. The input is the moment vector $\vect{u} = (\vect{u}_0,\dots,\vect{u}_N)^T$, followed by the MLPs to compute $L\in\mathbb{R}^{(N+1)\times(N+1)}$, $L_x\in\mathbb{R}^{(N+1)\times(N+1)}$, and $L_y\in\mathbb{R}^{(N+1)\times(N+1)}$. Then $M_x$ and $M_y$ are obtained by symmetrizing $L_x$ and $L_y$, and $H$ is computed as $LL^T+\varepsilon I$. Finally, the closure terms $G_{N-1}$ and $G_N$ in the $x$-direction are computed by multiplying $H$ with $A_{N-1}$ and $M_x$, respectively, and the closure terms $K_{N-1}$ and $K_N$ in the $y$-direction are computed by multiplying $H$ with $B_{N-1}$ and $M_y$, respectively. With this architecture, the hyperbolicity constraints in \eqref{eq:closure-constraints-3} are automatically satisfied for any choice of the parameters in the MLPs.
}
\label{fig:nn-architecture}
\end{figure}

To derive the loss function, we consider the difference between the exact evolution and the closed evolution. We can write down the exact evolution of the highest-order moment $\vect{u}_N$ (without closure) as:
\begin{equation}
\partial_t \vect{u}_N 
+ \half A_{N-1} \partial_x \vect{u}_{N-1} 
+ \half A_{N}^T \partial_x \vect{u}_{N+1}
+ \half B_{N-1} \partial_y \vect{u}_{N-1} 
+ \half B_{N}^T \partial_y \vect{u}_{N+1} = Q_N \vect{u}_N.
\end{equation} 
We replace this with the closed evolution
\begin{equation}
\partial_t \vect{u}_N 
+ \half G_{N-1} \partial_x \vect{u}_{N-1}
+ \half G_{N} \partial_x \vect{u}_{N}
+ \half K_{N-1} \partial_y \vect{u}_{N-1} 
+ \half K_{N} \partial_y \vect{u}_{N} = Q_N \vect{u}_N.
\end{equation}
Compare the two equations, we can define the loss function as
\begin{equation}\label{eq:loss-function}
\begin{aligned}
\mathcal{L}
= {}&
\Bigl\|
(G_{N-1}-A_{N-1}) \partial_x \vect{u}_{N-1}
+ G_{N} \partial_x \vect{u}_{N}
- A_{N}^T \partial_x \vect{u}_{N+1}
\\
&\qquad
+ (K_{N-1}-B_{N-1}) \partial_y \vect{u}_{N-1}
+ K_{N} \partial_y \vect{u}_{N}
- B_{N}^T \partial_y \vect{u}_{N+1}
\Bigr\|^2.
\end{aligned}
\end{equation}

\section{Numerical experiments}\label{sec:numerical-experiments}

In this section, we conduct numerical experiments to investigate the performance of our ML moment closure. 
We first illustrate the training and evaluation procedure. We then begin with a simple single-mode sine-wave experiment that serves as a calibration test for the training pipeline and for the choice of retained order. We then move to a family of random multi-mode sine-wave initial conditions, which is the main in-family generalization test. Finally, we consider training on multi-mode data with varying material parameters in order to test robustness with respect to both the initial condition and the medium.

\subsection{Training and Evaluation Procedure}

In this part, we describe the training and evaluation procedure for the ML closure model. First, to generate the training data, we make use of a $P_N$ solver based on StaRMAP \cite{seibold2014starmap}, a second-order staggered-grid method implemented in MATLAB. Here, we run the solver at a sufficiently large moment order to obtain a good approximation to the kinetic solution. For each trajectory, we save the moments $\vect{u}_l$ for $0\le l\le {N+1}$ at the specified time steps.
Then, we preprocess the training data to compute the spatial derivatives $\partial_x \vect{u}_l$ and $\partial_y \vect{u}_l$ for $l = N-1, N, N+1$ using a second-order finite difference method.

Then, we train the ML closure model by minimizing the residual-based loss in \eqref{eq:loss-function} on the processed reference trajectories. The input to the network is the moment vector $(\vect{u}_0,\dots,\vect{u}_N)$, while the derivatives of $\vect{u}_{N-1}$, $\vect{u}_N$, and $\vect{u}_{N+1}$ are used inside the loss to evaluate the mismatch in the last retained block.

For the hyperbolic model, the network consists of one branch generating an SPD matrix $H(\vect{u})$ and two branches generating symmetric matrices $M_x(\vect{u})$ and $M_y(\vect{u})$. These outputs determine the learnable closure blocks through the structural formulas in \eqref{eq:closure-constraints-3}, so the architecture enforces the symmetry and positivity constraints by construction.

In the numerical experiments, we use the mini-batch AdamW optimizer with a learning rate of $10^{-3}$ and a batch size of 1024. The MLP architecture consists of 2 hidden layers with 64 neurons in each layer, if not specified otherwise. We train for 1000 epochs and select the checkpoint with the smallest validation loss for evaluation.

\subsection{Task 1: a single-mode sine wave solution}

In this task, the setup is relatively simple. Its purpose is not to demonstrate the strongest possible performance of the ML closure model, but rather to calibrate the workflow on a clean low-complexity example. Here, we generate the sine wave data in which only the zeroth-order moment is nonzero initially and all higher moments are zero. To keep the specific intensity positive, we take the initial condition of the zeroth moment to be
\begin{equation}\label{eq:single-mode-sine-initial-condition}
    u_0(x,y,0) = \sin\bigl(\pi(x + y)\bigr) + 2,
\end{equation}
The computational domain is $[-1, 1]^2$ with periodic boundary conditions, and the material parameters are $\sigma_a=0$ and $\sigma_{s}=1$.

The first step in this task is to determine a reference moment order and generate the training data with the initial condition \eqref{eq:single-mode-sine-initial-condition}. We first run the StaRMAP solver for different moment orders and compare the solutions in Fig.~\ref{fig:reference-sine-kx1-ky1}. We observe that $P_2$ has a large error in the zeroth moment at $t=1$, while $P_3$ already shows a significant improvement. The higher-order models $P_4$, $P_5$, and $P_6$ are nearly indistinguishable from the $P_{10}$ model. This comparison suggests that $P_{10}$ is a good reference solution for this case, and that the lower-order models are already close to the reference in the zeroth moment when $N\ge 4$.

\begin{figure}[ht]
\centering
{
\includegraphics[width=0.5\textwidth]{\figureprefix 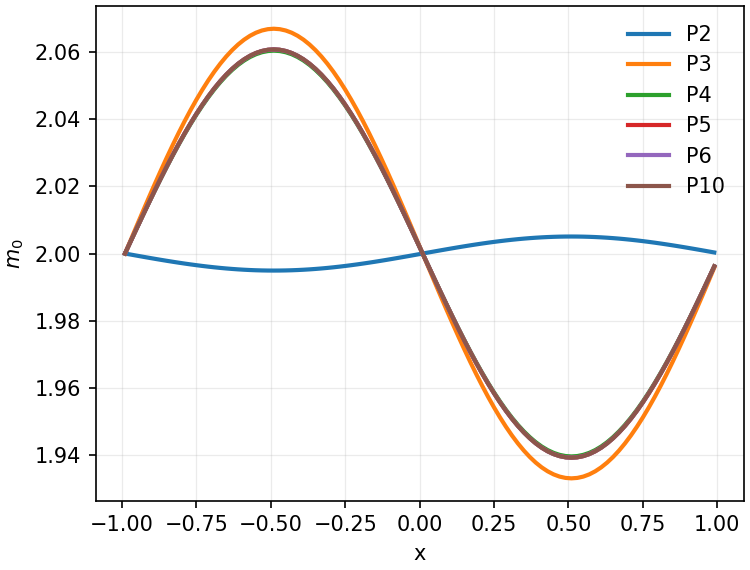}
}
\caption{Task 1: Comparison of $P_N$ models for the single-mode sine wave initial condition in \eqref{eq:single-mode-sine-initial-condition}. The figure shows the 1D cuts at $y=0$ and $t=1$ for the zeroth-order moment $u_0$ from $P_2,P_3,P_4,P_5,P_6$, and $P_{10}$. We observe that $P_2$ exhibits a large error, and $P_3$ already shows a significant improvement. The other higher-order models $P_4$, $P_5$, and $P_6$ are all nearly indistinguishable from the $P_{10}$ model.}
\label{fig:reference-sine-kx1-ky1}
\end{figure}

The second step is to train the closure model using the generated training data from the $P_{10}$ reference solution at $t=0, 0.1, 0.2, \dots, 1$. After choosing a retained order $N$ in the ML model, we use the reference solution as the training data. At each grid point and each recorded time, one sample contains the retained state $(\vect{u}_0,\dots,\vect{u}_N)$ together with the derivative blocks required by the residual formulation, namely $\partial_x \vect{u}_{N-1}$, $\partial_y \vect{u}_{N-1}$, $\partial_x \vect{u}_N$, $\partial_y \vect{u}_N$, $\partial_x \vect{u}_{N+1}$, and $\partial_y \vect{u}_{N+1}$. The neural network takes $(\vect{u}_0,\dots,\vect{u}_N)$ as input and predicts the matrices $H(\vect{u})$, $M_x(\vect{u})$, $M_y(\vect{u})$, from which the last-row closure blocks are assembled by $G_{N-1}=H A_{N-1}$, $K_{N-1}=H B_{N-1}$, $G_N = H M_x$, $K_N = H M_y$. We then minimize the residual loss on these pointwise samples. We train for 1000 epochs and select the checkpoint with the smallest validation loss for evaluation.

The third step is to test the trained closure. We insert the ML last-row correction into the 2D $P_N$ solver and run a full prediction with the same sine wave initial condition \eqref{eq:single-mode-sine-initial-condition}. We then compare the results from the ML moment closure against both the linear $P_N$ baseline and the deterministic $P_{10}$ reference solution.

The results with $N=2$ at $t=1$ are shown in Fig.~\ref{fig:sine-kx1-ky1-rollout-compare}(a). The ML moment closure with $N=2$ greatly improves over the linear $P_2$ model, with only a small deviation near the extremes of the sine wave. The relative $L^2$ error of the zeroth order moment with respect to $P_{10}$ is $1.75\times10^{-4}$ for the ML closure, while it is $2.33\times 10^{-2}$ for the linear $P_2$ model. Thus, it improves the error by roughly a factor of $133$. This indicates that, in this simple case, learning a closure greatly outperforms the linear $P_N$ closure.
      
The results with $N=3$ at $t=1$ are shown in Fig.~\ref{fig:sine-kx1-ky1-rollout-compare}(b). In this case, the linear $P_3$ model already performs well, with only a small deviation from the $P_{10}$ reference near the extremes. The ML closure improves over the linear $P_3$ model, with perfect agreement with the reference solution. The relative $L^2$ error of the zeroth moment with respect to $P_{10}$ is $5.04\times 10^{-5}$ for the ML closure, while it is $2.17\times 10^{-3}$ for the linear $P_3$ model. Thus, the ML closure reduces the error by roughly a factor of $43$ relative to the linear $P_3$ model. In this case, learning the closure still provides an improvement, but the baseline is already quite accurate, so the relative gain is smaller than the case of $N=2$.

\begin{figure}[htbp]
    \centering
    \begin{subfigure}[b]{0.48\textwidth}
        \centering
        \includegraphics[width=\textwidth]{\figureprefix 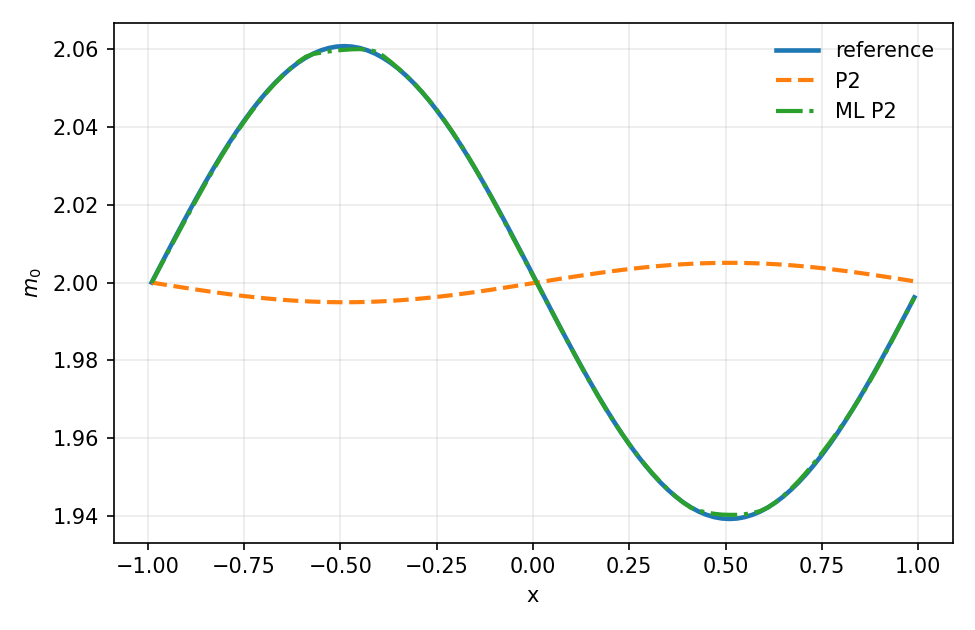}
        \caption{$N=2$}
    \end{subfigure}
    \hfill
    \begin{subfigure}[b]{0.48\textwidth}
        \centering
        \includegraphics[width=\textwidth]{\figureprefix 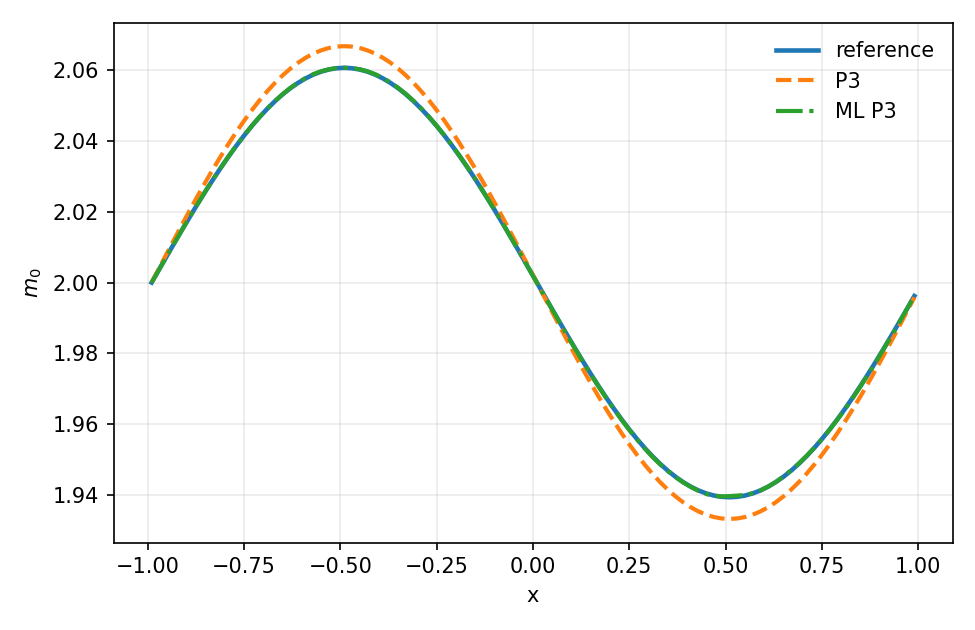}
        \caption{$N=3$}
    \end{subfigure}
    \caption{Task 1: Comparison for the ML closures and the linear $P_N$ models. The left panel compares the linear $P_2$ model, the ML $P_2$ model, and the $P_{10}$ reference at $t=1$ along the line $y=0$. The right panel shows the analogous comparison for the case of $N=3$. In $N=2$, the ML closure improves a lot over the linear $P_2$ model, with only a minor mismatch near the extremes. In $N=3$, the ML closure is better than the linear $P_3$ model.}
    \label{fig:sine-kx1-ky1-rollout-compare}
\end{figure}

In summary, Task~1 provides a controlled validation of the proposed ML closure framework on a simple single-mode sine wave problem. Using the $P_{10}$ data for training, the ML closures for both $N=2$ and $N=3$ yield more accurate predictions than their corresponding linear $P_N$ models. For the zeroth-order moment at $t=1$, the relative $L^2$ error is reduced by approximately two orders of magnitude for $N=2$ and by more than one order of magnitude for $N=3$. These results demonstrate that, even in this low complexity setting, the ML closure is able to recover the effect of unresolved higher-order moments and substantially improve the accuracy relative to the standard $P_N$ closure.

\subsection{Task 2: a family of multi-mode sine wave solutions with fixed material parameters}

This numerical experiment broadens the training distribution from one fixed wave in Task 1 to a family of random multi-mode sine wave initial conditions. The purpose is to answer the question of whether an ML closure can improve the predictions across a diverse family of solutions. We use the initial condition in the form of a truncated Fourier series with random coefficients and phase shifts \cite{han2019uniformly}:
\begin{equation}\label{eq:multi-sine-ic}
    u_0(x,y,0) = \sum_{m, n=1}^{k_{\max}} a_{m, n} \sin\bigl(\pi(m x + n y) + \phi_{m, n}\bigr) + a_0.
\end{equation}
Here $a_{m,n}$ are random variables sampled uniformly from $[-\frac{1}{mn}, \frac{1}{mn}]$, $0\le \phi_{m,n}\le 2\pi$ is a random phase shift, and $a_0 = \sum_{m, n=1}^{k_{\max}} \frac{1}{mn} + c$ with $c$ sampled uniformly from $[0,1]$ to ensure the initial density is positive. In the current experiment, we take $k_{\max}=10$ to include a large number of modes in the initial condition, which leads to a more challenging test in both training and prediction. The computational domain is $[-1, 1]^2$ with periodic boundary conditions, and the material parameters are fixed as $\sigma_a=0$ and $\sigma_{s}=1$.

Since the initial condition \eqref{eq:multi-sine-ic} is more complex than the single-mode sine wave, the reference moment system needs to be solved at a higher order to ensure the accuracy of the training data. Numerically, we observe that the $P_{50}$ model is already very close to the $P_{100}$ reference in the zeroth-order moment for this family of initial conditions, and the error does not decrease significantly when the moment order increases beyond $50$. Thus, the $P_{50}$ model can generate good reference solutions for this experiment, and we use it to generate the training data for the ML closure.

We first run the StaRMAP solver for the $P_{50}$ model on the time interval $[0,1]$ with snapshots at $t=0,0.1,\dots,1$. To visualize the initial condition of the generated multi-sine data, we plot the initial conditions of the zeroth-order moments for five different random seeds in Fig.~\ref{fig:multi-sine-seed-initial-condition}. We observe that the initial conditions are quite diverse across different seeds, which indicates that the training data covers a wide range of initial conditions.

\begin{figure}[ht]
\centering
{%
\includegraphics[width=0.5\textwidth]{\figureprefix 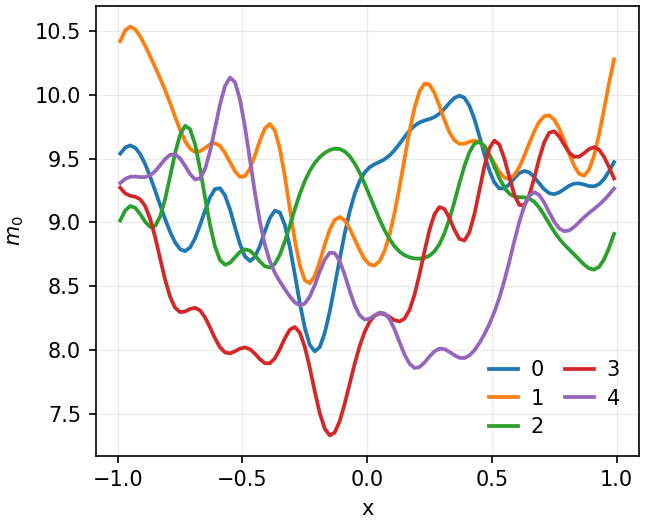}%
}
\caption{Task 2: Visualization of the initial conditions for the generated multi-sine $P_{50}$ data for $k_{\max}=10$. It shows the 1D cuts at $y=0$ of the zeroth-order moment at the initial time $t=0$ for five realizations with random seeds $0,1,2,3,4$. This illustrates the diversity of the random multi-sine initial conditions.}
\label{fig:multi-sine-seed-initial-condition}
\end{figure}

The second step is to train the ML closure model using the generated training data. To find the optimal hyperparameters for the ML closure, we perform a grid search over the hidden width and the number of hidden layers. In particular, we take the number of hidden layers $l \in \{1,2,3,4,5\}$ and the number of neurons per layer $w \in \{16,32,64,128,256\}$. For each fixed retained order, Fig.~\ref{fig:multisine-hparam-tuning} plots the best validation relative loss versus the number of layers, with one curve for each hidden width. Across all four retained orders $N=3,4,5,9$, increasing the width and depth consistently improves the validation error over the tested range. In particular, within this sweep, the best configuration is obtained at width $w = 256$ and depth $l = 5$, with best validation relative losses approximately $3.82\times 10^{-2}$, $1.12\times 10^{-2}$, $7.40\times 10^{-3}$, and $6.61\times 10^{-3}$, respectively. Due to the limitation of computational resources, we do not explore larger widths and depths in this experiment, but the observed trend suggests that further improvement may be possible by increasing the model capacity. In the numerical experiments in the rest of this section, we use the best hyperparameters  found in this sweep with width $256$ and depth $5$ for all retained orders.
\begin{figure}[htbp]
\centering
\begin{subfigure}[t]{0.48\textwidth}
    \centering
    \includegraphics[width=\textwidth]{\figureprefix 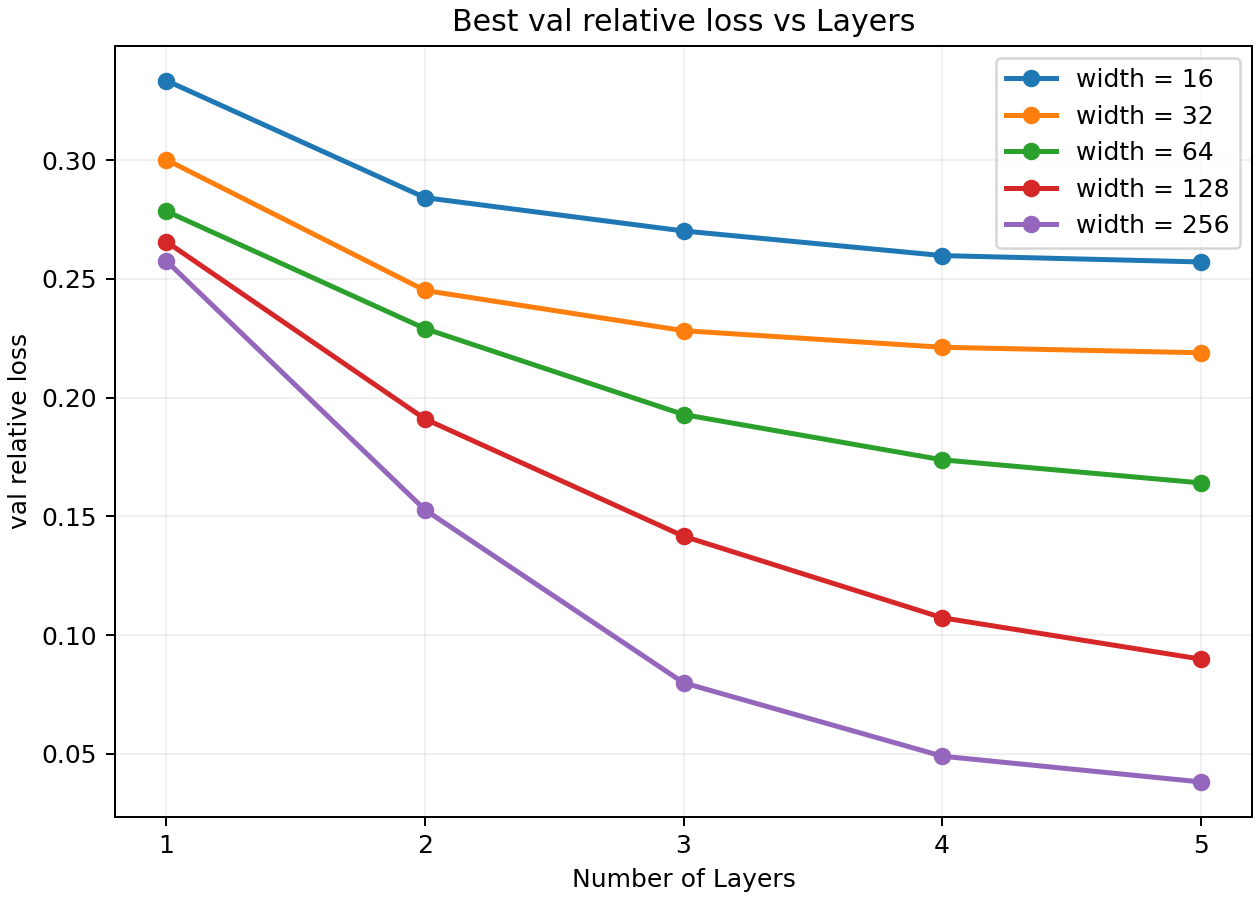}
    \caption{$N=3$}
\end{subfigure}
\hfill
\begin{subfigure}[t]{0.48\textwidth}
    \centering
    \includegraphics[width=\textwidth]{\figureprefix 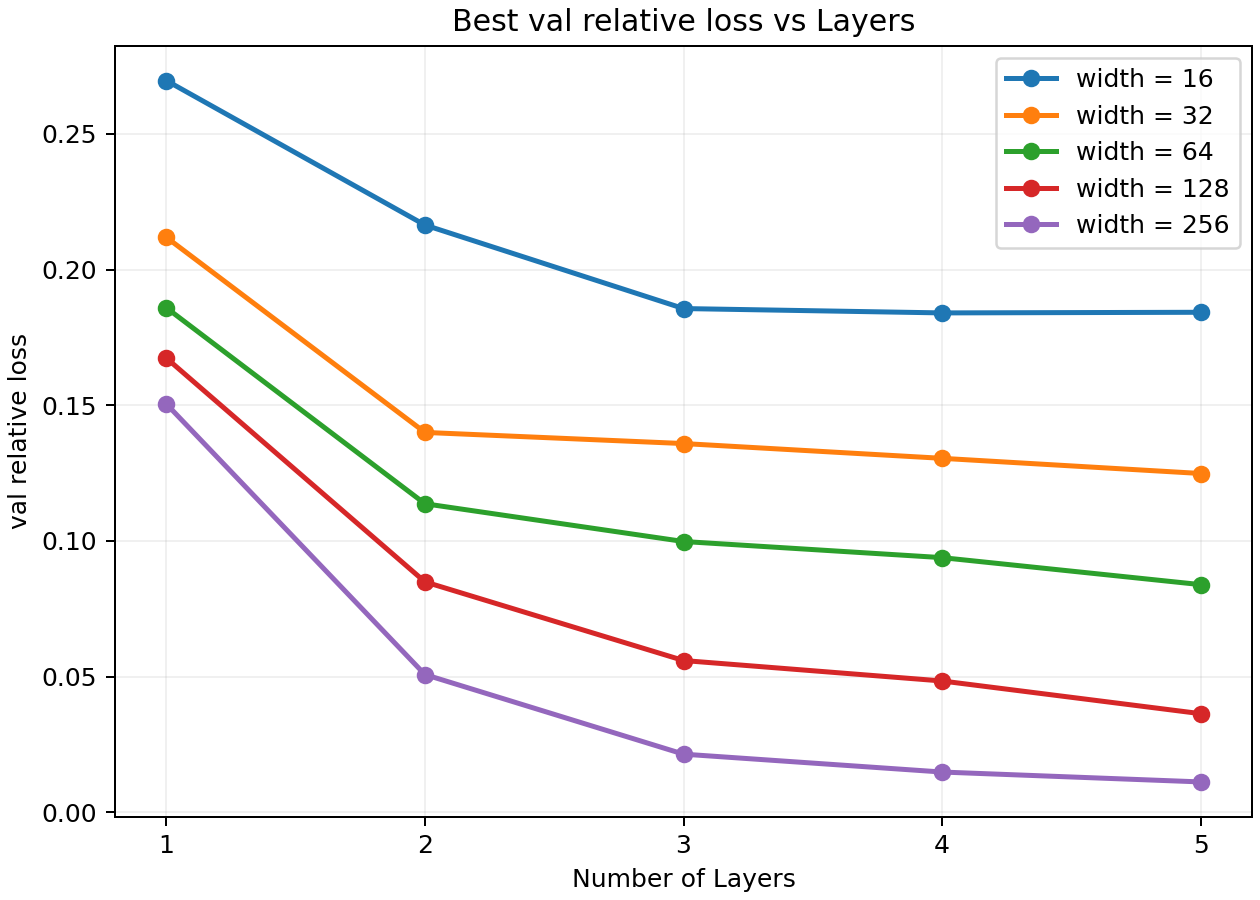}
    \caption{$N=5$}
\end{subfigure}

\vspace{0.5em}

\begin{subfigure}[t]{0.48\textwidth}
    \centering
    \includegraphics[width=\textwidth]{\figureprefix 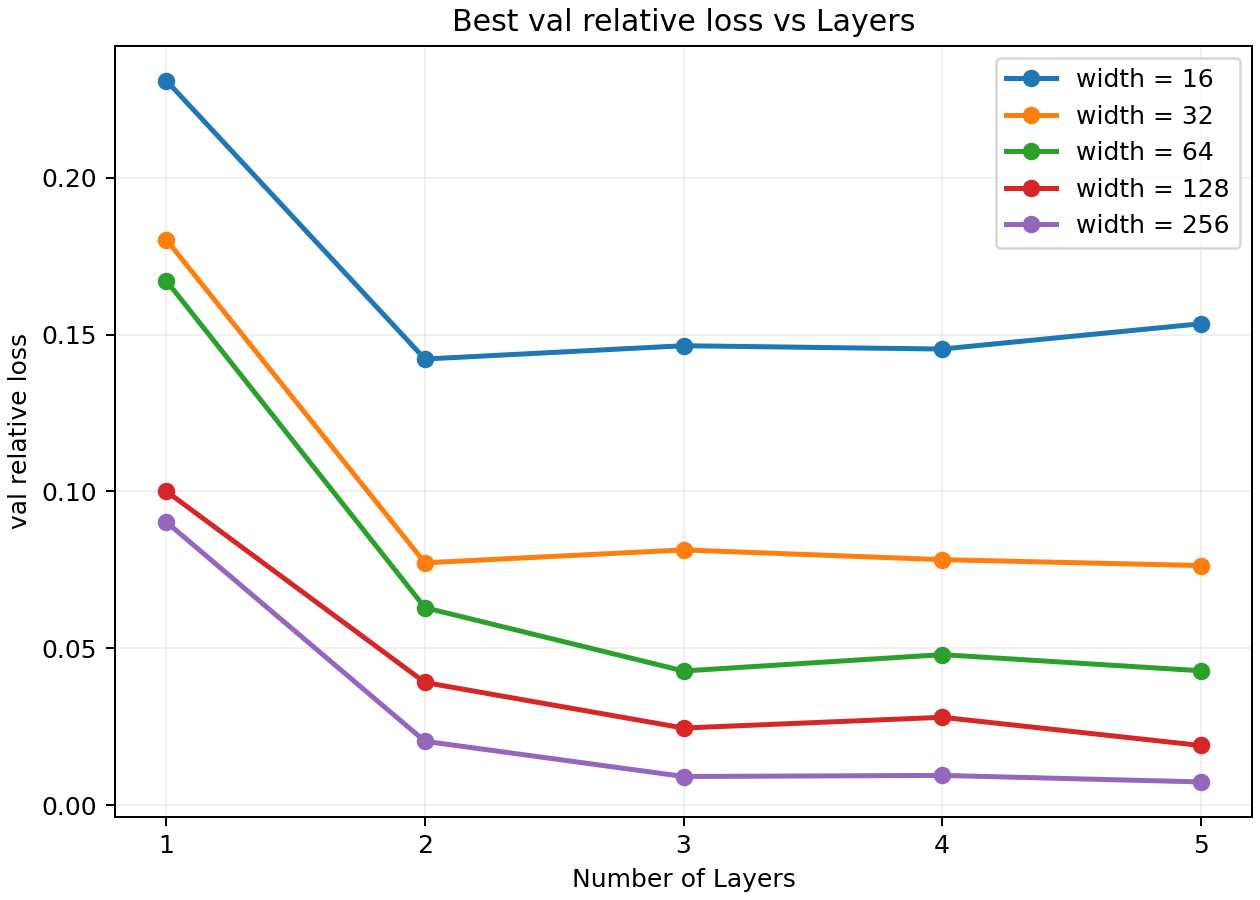}
    \caption{$N=7$}
\end{subfigure}
\hfill
\begin{subfigure}[t]{0.48\textwidth}
    \centering
    \includegraphics[width=\textwidth]{\figureprefix 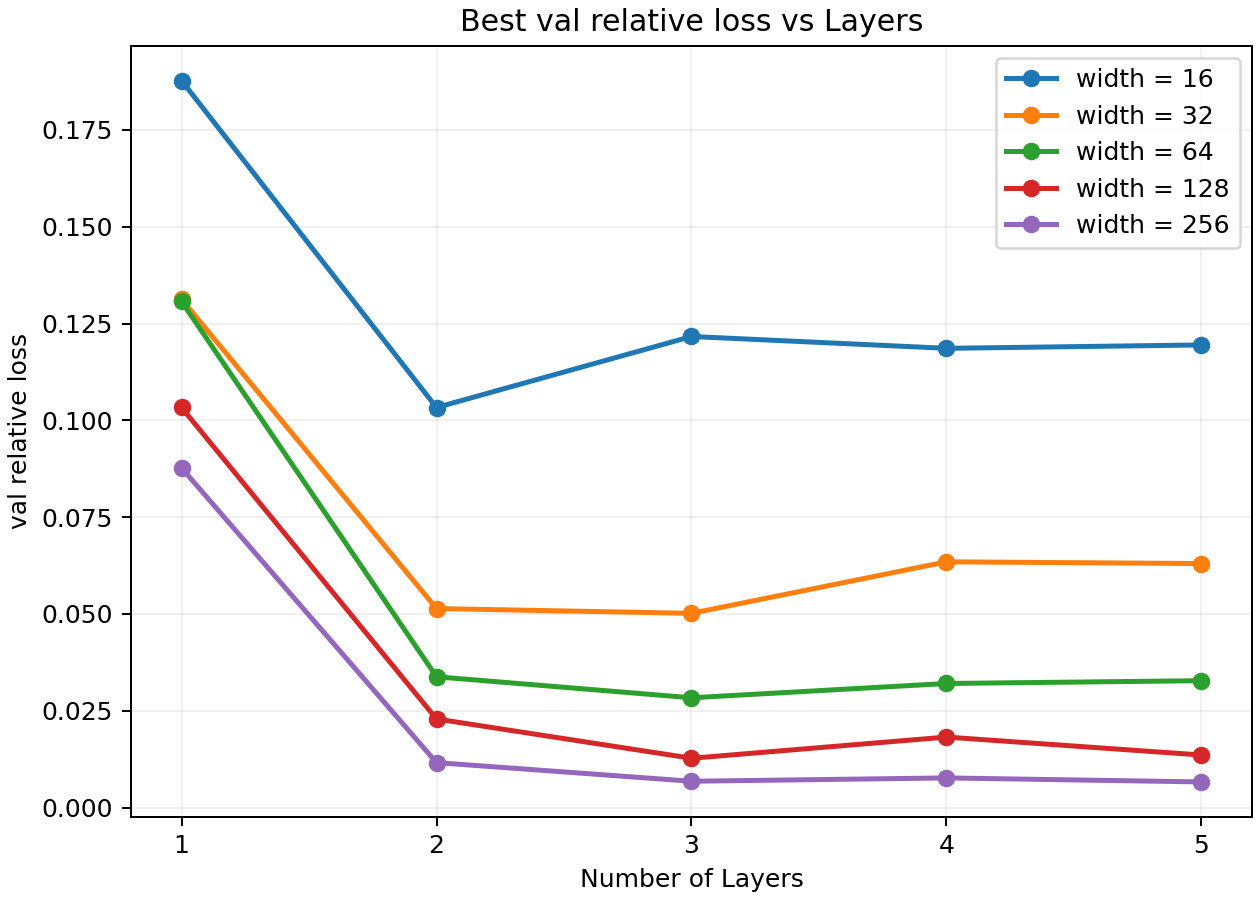}
    \caption{$N=9$}
\end{subfigure}
\caption{Task 2: Hyperparameter sweep for the multi-sine training problem with fixed material. Each panel corresponds to one retained moment order $N=3,5,7,9$, and shows the best validation relative loss as a function of the number of hidden layers for widths $w=16,32,64,128,256$. Over the tested range, larger widths and deeper networks generally lead to lower validation error, with the strongest performance in this sweep achieved by the $w=256$, $L=5$ models.}
\label{fig:multisine-hparam-tuning}
\end{figure}

The third step is to test the trained closure by running the prediction from the same family of multi-sine initial condition and comparing the ML closure against both the linear $P_N$ baseline and the $P_{50}$ reference solution. 
We first test the performance of the ML closure on the same random seed used in training. In particular, we compare the ML closure with $N=3$ against the linear $P_3$ baseline on the trajectory with seed $0$. In Fig.~\ref{fig:multisine-p3-testing-contour}, we observe that, the $P_3$ model shows large oscillations, while the ML closure produces a much smoother solution that is closer to the $P_{50}$ reference. We also notice that the ML closure has some visible error in approximating the reference solution. This is probably because the order $N=3$ is still relatively low for this complex multi-sine problem, which makes it more difficult for the ML closure to learn the effect of the unresolved higher-order moments. 
\begin{figure}[htbp]
    \centering
    \begin{subfigure}[b]{0.67\textwidth}
        \centering
        \includegraphics[width=\textwidth]{\figureprefix 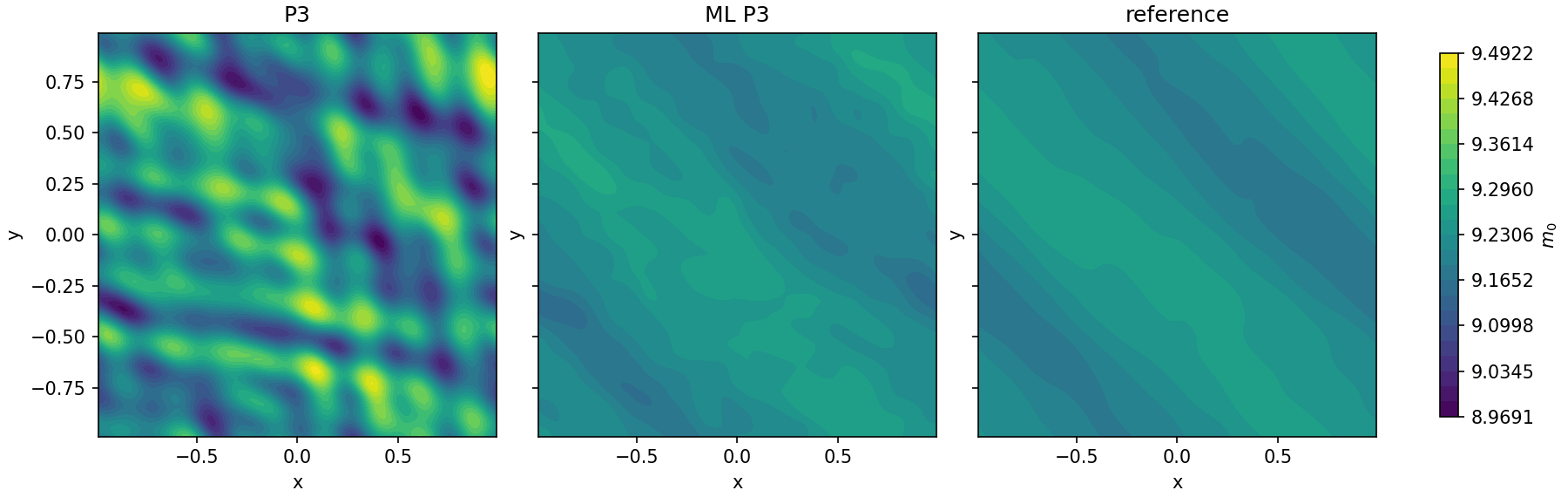}
        \caption{2D contour map}
    \end{subfigure}
    \hfill
    \begin{subfigure}[b]{0.32\textwidth}
        \centering
        \includegraphics[width=\textwidth]{\figureprefix 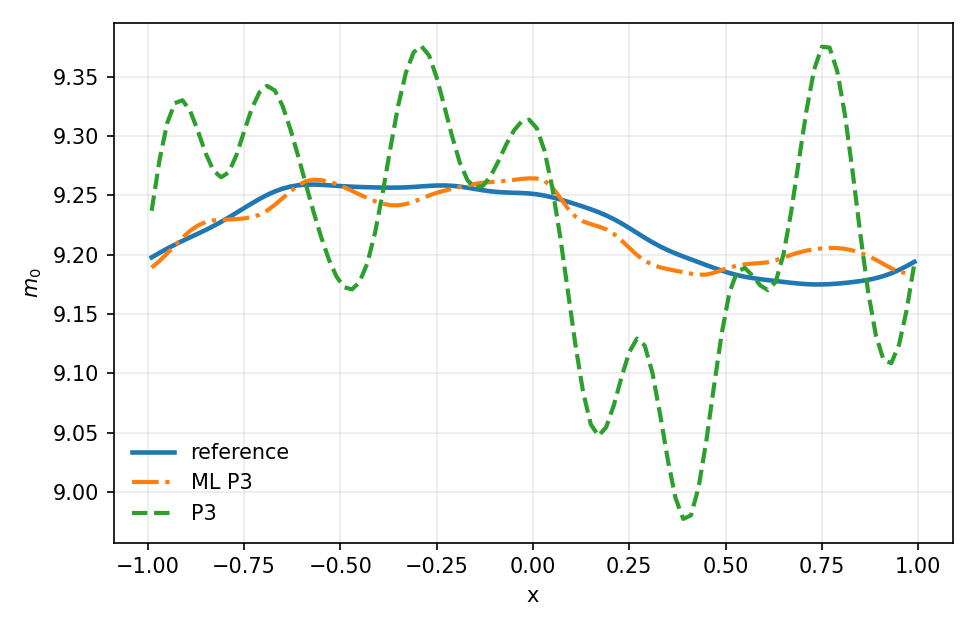}
        \caption{1D cut at $y=0$}
    \end{subfigure}
    \caption{Task 2: Comparison for the ML closure with $N=3$ and $P_3$ closure at $t=1$ and random seed $0$. The left figure shows the 2D contour map of the $P_3$ closure, the ML closure with $N=3$, and the $P_{50}$ reference. The right figure shows the 1D cut at $y=0$ for the same three solutions. We observe that the ML solution is much closer to the $P_{50}$ reference than the linear baseline in the full spatial field.}
    \label{fig:multisine-p3-testing-contour}
\end{figure}

Then, we increase the retained order to $N=5$ and test the ML closure against the linear $P_5$ baseline on the same trajectory with random seed $0$. The results are shown in Fig.~\ref{fig:multisine-p5-testing-contour}. We observe that, the linear $P_5$ model still shows some oscillations, while the ML closure produces a much smoother solution that is closer to the $P_{50}$ reference. The ML closure also has some visible error in approximating the reference solution, but it is better than the ML closure with $N=3$ and substantially better than the linear $P_5$ model. This suggests that, as the retained order increases, learning the closure becomes more effective in improving the accuracy.
\begin{figure}[htbp]
    \centering
    \begin{subfigure}[b]{0.67\textwidth}
        \centering
        \includegraphics[width=\textwidth]{\figureprefix 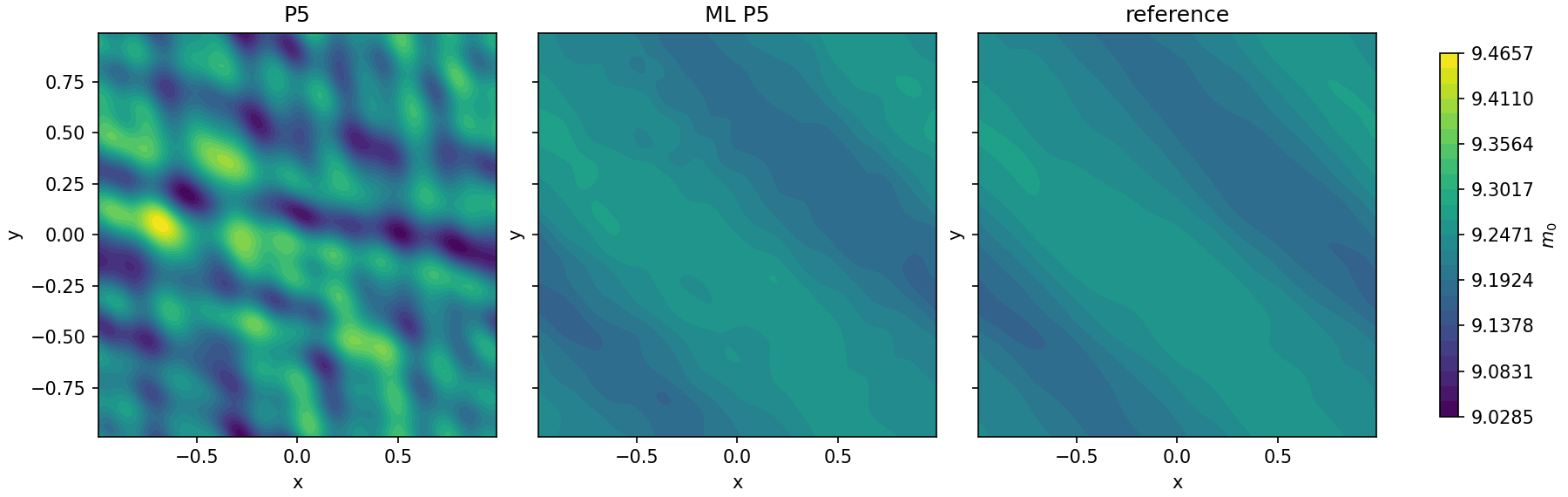}
        \caption{2D contour map}
    \end{subfigure}
    \hfill
    \begin{subfigure}[b]{0.32\textwidth}
        \centering
        \includegraphics[width=\textwidth]{\figureprefix 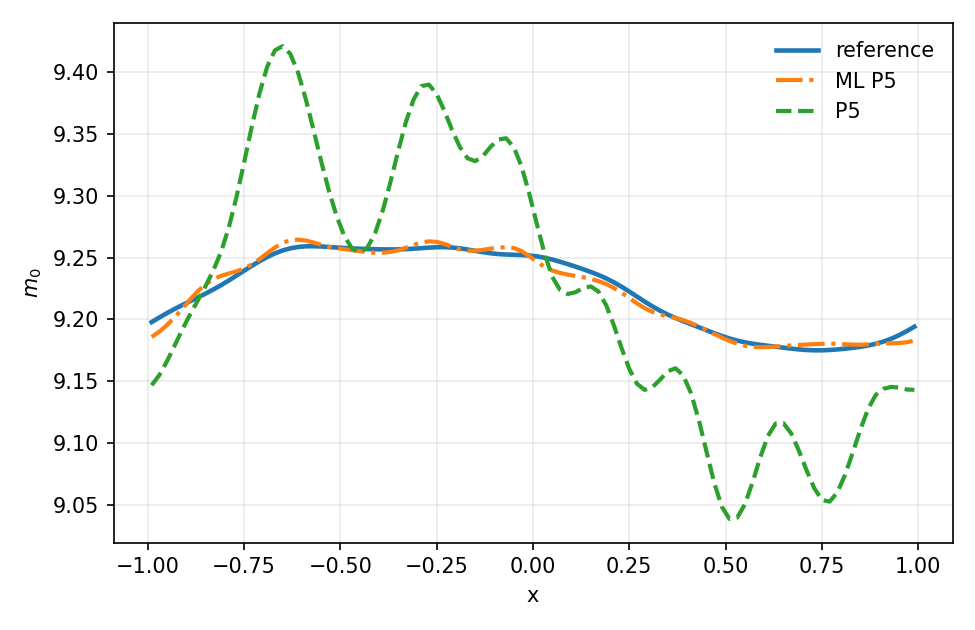}
        \caption{1D cut at $y=0$}
    \end{subfigure}
    \caption{Task 2: Comparison for the ML closure with $N=5$ and $P_5$ closure at $t=1$ and random seed $0$. The left figure shows the linear $P_5$ closure, the ML closure with $N=5$, and the $P_{50}$ reference. The right figure shows the 1D cut at $y=0$ for the same three solutions. 
    We observe that the ML solution is much closer to the $P_{50}$ reference than the linear baseline in the full spatial field.}
    \label{fig:multisine-p5-testing-contour}
\end{figure}

To test the generalization of the ML closure, we also run other realizations with different random seeds in the same family of multi-sine initial conditions but not used in training. In particular, we test the ML closure with $N=5$ on the trajectory with random seed $11$, which is outside the training set, and compare it against the linear $P_5$ baseline and the $P_{50}$ reference. The results are shown in Fig.~\ref{fig:multisine-p5-testing-contour-seed11}. We observe that, even on this unseen trajectory, the ML closure still produces a much smoother solution that is closer to the $P_{50}$ reference than the linear $P_5$ model. This suggests that the ML closure can generalize to new initial conditions within the same family of multi-sine solutions, and it can still improve over the linear baseline in this setting.
\begin{figure}[htbp]
    \centering
    \begin{subfigure}[b]{0.67\textwidth}
        \centering
        \includegraphics[width=\textwidth]{\figureprefix 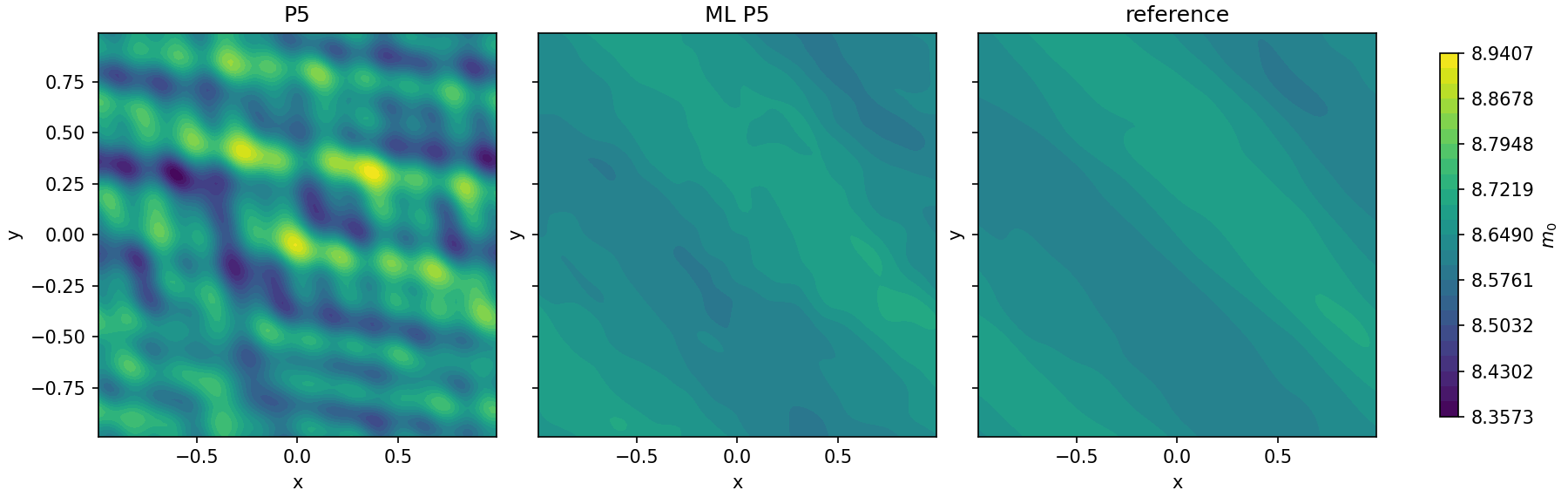}
        \caption{2D contour map}
    \end{subfigure}
    \hfill
    \begin{subfigure}[b]{0.32\textwidth}
        \centering
        \includegraphics[width=\textwidth]{\figureprefix 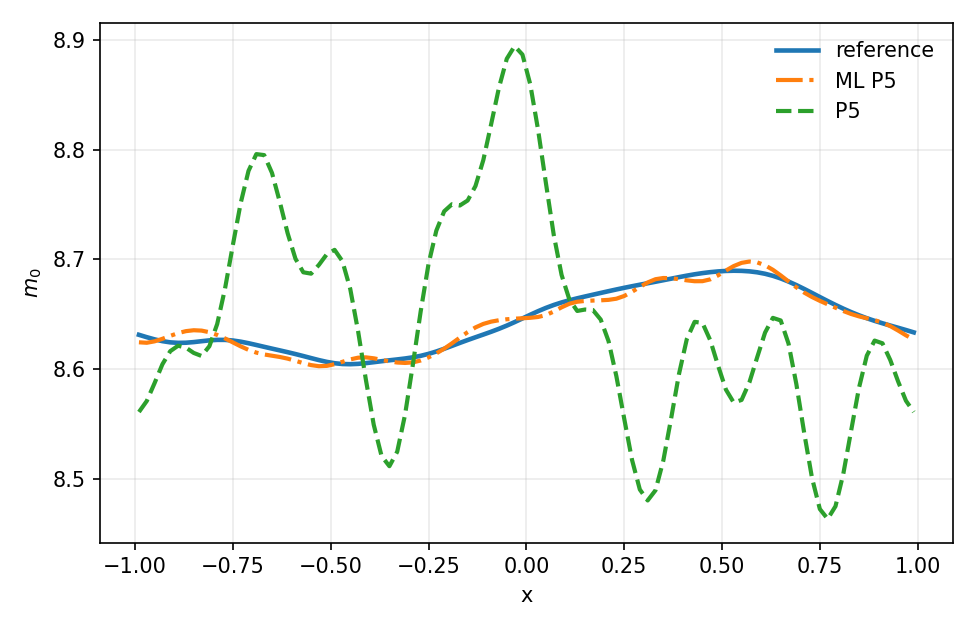}
        \caption{1D cut at $y=0$}
    \end{subfigure}
    \caption{Task 2: Comparison for the ML closure with $N=5$ and $P_5$ closure at $t=1$ and random seed $11$. The left figure shows the ML closure with $N=5$, the linear $P_5$ closure baseline, and the $P_{50}$ reference. The right figure shows the 1D cut at $y=0$. We observe that the ML solution is closer to the $P_{50}$ reference than the linear baseline in the full spatial field.}
    \label{fig:multisine-p5-testing-contour-seed11}
\end{figure}

In summary, these results show that, for the family of multi-mode sine wave solutions with fixed material coefficients, the ML closure consistently improves over the linear $P_N$ model. The test on the unseen trajectory further shows that the improvement is not restricted to a single training realization, but extends to new members of the same multi-sine family. Taken together, these experiments provide evidence that the ML closure captures useful unresolved moment information and shows a clear generalization improvement over the standard linear $P_N$ closure.

\subsection{Task 3: a family of multi-mode sine wave solutions with different material parameters}

The previous two tasks keep the material coefficients fixed and only vary the initial condition. This setting is useful for testing whether the ML closure can improve on the linear $P_N$ baseline within one homogeneous medium, but it does not yet address a more important practical question: whether the ML closure can remain accurate across different scattering and absorption regimes. The purpose of this third task is therefore to enlarge the training distribution in two directions at once: (1) the initial data are still taken from the random multi-sine family \eqref{eq:multi-sine-ic}, so that the spatial profiles remain diverse; (2) the homogeneous material parameters are now also varied from one training trajectory to another, by sampling both $\sigma_a$ and $\sigma_{s}$.

To generate the training data for this task, we follow the same procedure as in Task~2 to run the StaRMAP solver for the $P_{50}$ model on the time interval $[0,1]$ with snapshots at $t=0,0.1,\dots,1$. However, instead of fixing the material parameters in Task~2, we now sample them from prescribed ranges for each random multi-sine realization. In particular, we take $\sigma_a \in [0, 1]$ and $\sigma_{s} \in [1, 10]$ to cover a range of absorption and scattering regimes. We take the random seeds $0,1,\dots,99$ to generate a total of $100$ trajectories with different multi-sine initial conditions and different material parameters.

In practice, storing all 100 seeds with each seed containing 11 snapshots of the full $P_{50}$ moments and their derivatives can exceed the available local storage, especially when the retained order $N$ is large. 
We therefore replace the one-shot data generation pipeline by a streaming selection procedure: after generating one seed, we compute the score of the corresponding snapshot file based on the magnitude and spatial variation of the omitted degree block $\vect{u}_{N+1}$, which is the leading term in the $P_N$ closure residual:
\begin{equation}
    s = \|\vect{u}_{N+1}\| + \|\nabla \vect{u}_{N+1}\|.
\end{equation}
Hence we keep the files whose next order moment is both large in magnitude and rapidly varying in space, since these are precisely the regions where the closure model is expected to matter most. Maintaining a storage budget of $K$ files is then straightforward: after each newly processed seed, we recompute the scores of the currently available derivative files, keep the top $K$ files, and delete the rest. This yields an online top-$K$ selection rule that never requires the full $0,\dots,99$ dataset to be stored at once.

Fig.~\ref{fig:multisine-hard-file-selection} illustrates the final outcome for one such run with $K=100$. The left panel shows the retained derivative files in the $(\sigma_a,\sigma_{s})$ plane. The grey markers denote the seeds whose snapshots are all discarded, while the colored markers denote the seeds that contribute retained files; the color indicates the number of retained files for each seed. We observe that the random seeds are not uniformly retained across the $(\sigma_a,\sigma_{s})$ space, but rather cluster in the regions with smaller $\sigma_a$ and $\sigma_{s}$. This is consistent with the intuition that, in the more diffusive regime with larger $\sigma_a$ and $\sigma_{s}$, the higher-order moments are smaller in magnitude and less spatially varying, which leads to lower scores and less retention. The right panel shows the ranked file scores across all snapshot files, with the score threshold induced by the storage budget $K=100$ shown as a dashed line. The files above the threshold are retained for training, while those below are discarded. This hard selection procedure ensures that the training data for the ML closure focuses on the most informative snapshots, which can lead to better learning efficiency and improved performance within the storage constraints. 
\begin{figure}[htbp]
    \centering
    \begin{subfigure}[b]{0.48\textwidth}
        \centering
        \includegraphics[width=\textwidth]{\figureprefix 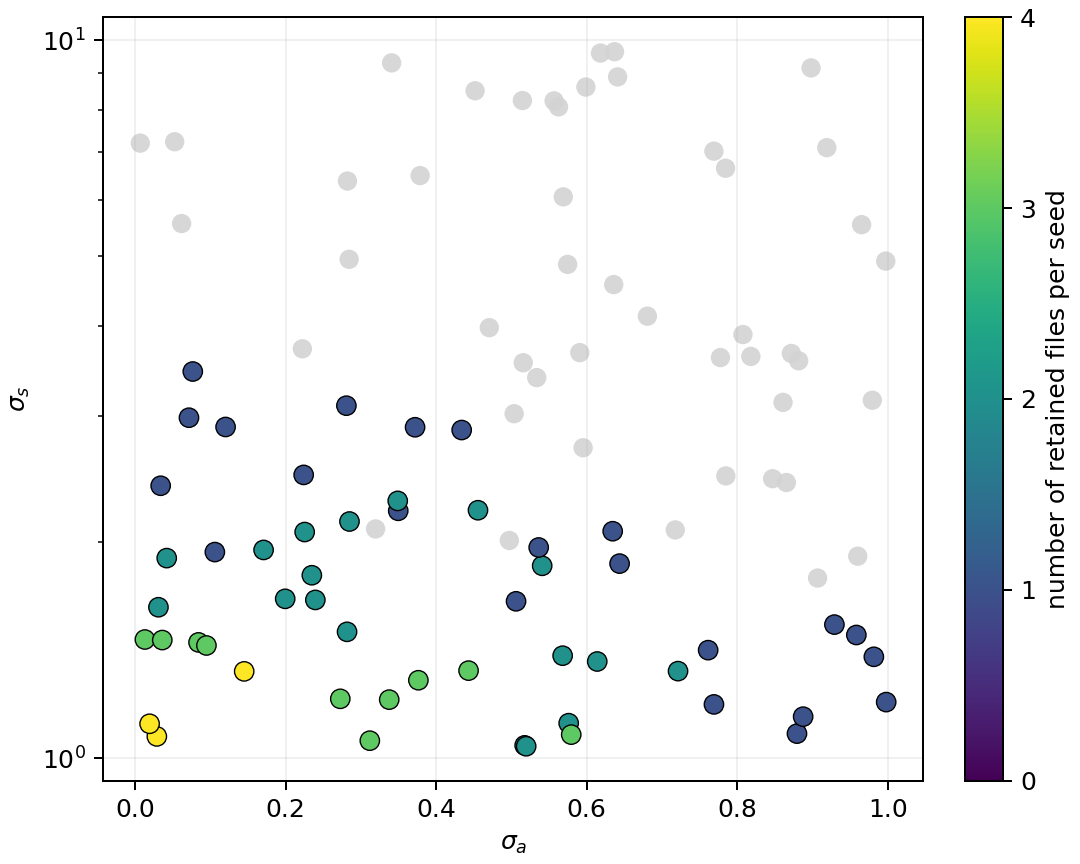}
        \caption{Sampled random seeds and retained training files in the $(\sigma_a,\sigma_s)$ space.}
    \end{subfigure}
    \hfill
    \begin{subfigure}[b]{0.48\textwidth}
        \centering
        \includegraphics[width=\textwidth]{\figureprefix 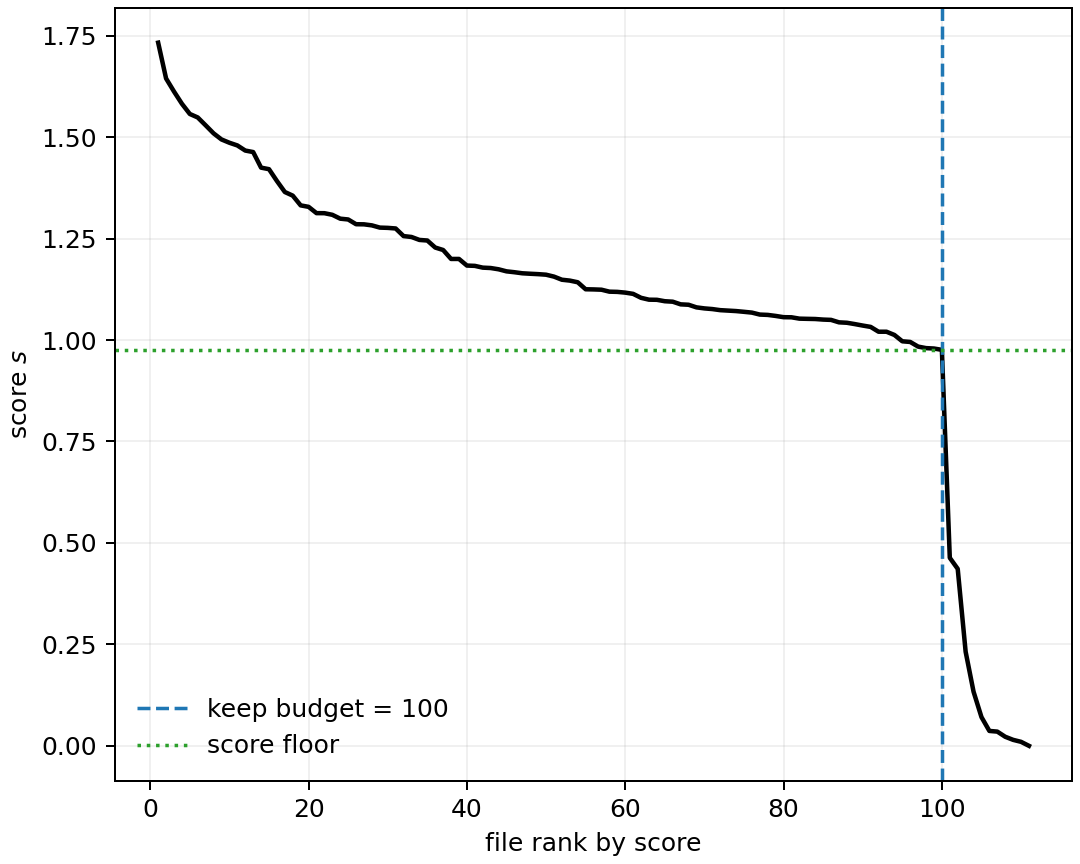}
        \caption{Ranked file score with keep budget and score floor.}
    \end{subfigure}
    \caption{Task 3: Left: All 100 randomly sampled seeds are shown in the $(\sigma_a,\sigma_s)$ plane. Gray markers denote that all snapshot files with the corresponding seed are not retained in the training set, while colored markers denote seeds that contribute retained training files; the color indicates the number of retained files for each seed. Right: The ranked stream scores of all snapshot files across 100 seeds, with the score threshold induced by the storage budget $K=100$ shown as a dashed line. The files above the threshold are retained for training, while those below are discarded.}
    \label{fig:multisine-hard-file-selection}
\end{figure}

After training the ML closure on the selected snapshot files, we test the trained closure on new trajectories with different random seeds and material parameters. In particular, we test the ML closure with $N=5$ on the trajectory with random seed $100$, which is outside the training set, and compare it against the linear $P_5$ baseline and the $P_{50}$ reference. The results are shown in Fig.~\ref{fig:multisine-p5-testing-contour-seed100}. We observe that, on this unseen trajectory with different material parameters, the ML closure still produces a much smoother solution that is closer to the $P_{50}$ reference than the linear $P_5$ model. This suggests that the ML closure can generalize to new initial conditions and new material parameters within the same family of multi-sine solutions, and it can still improve over the linear $P_N$ baseline in this setting.
\begin{figure}[htbp]
    \centering
    \begin{subfigure}[b]{0.67\textwidth}
        \centering
        \includegraphics[width=\textwidth]{\figureprefix 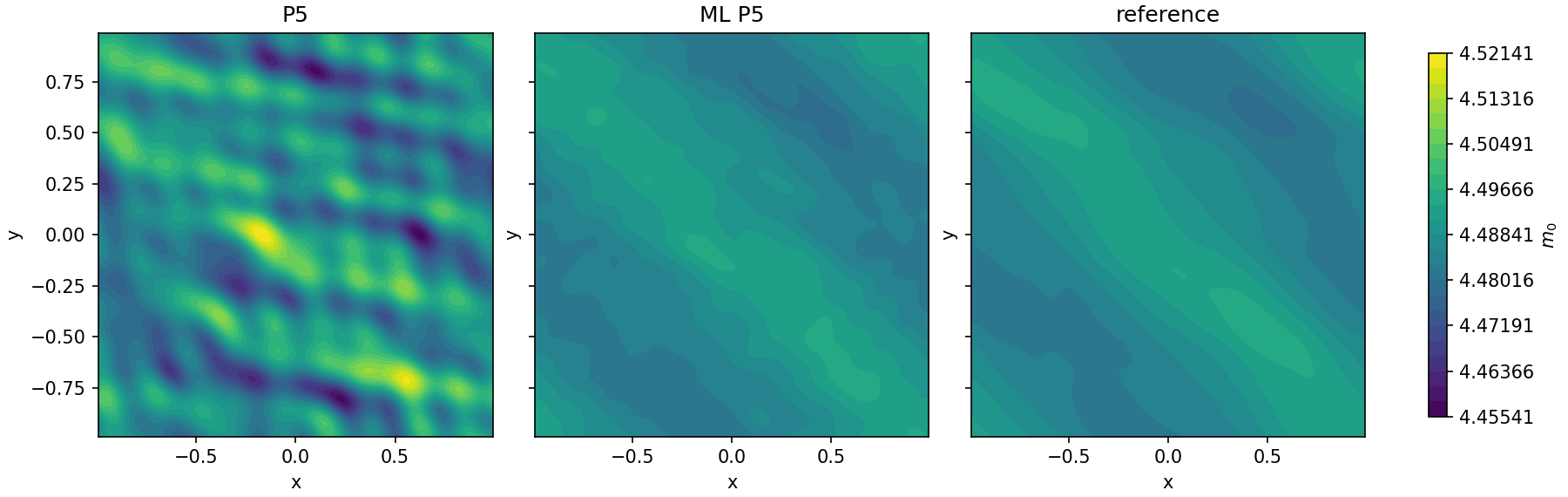}
        \caption{2D contour map}
    \end{subfigure}
    \hfill
    \begin{subfigure}[b]{0.32\textwidth}
        \centering
        \includegraphics[width=\textwidth]{\figureprefix 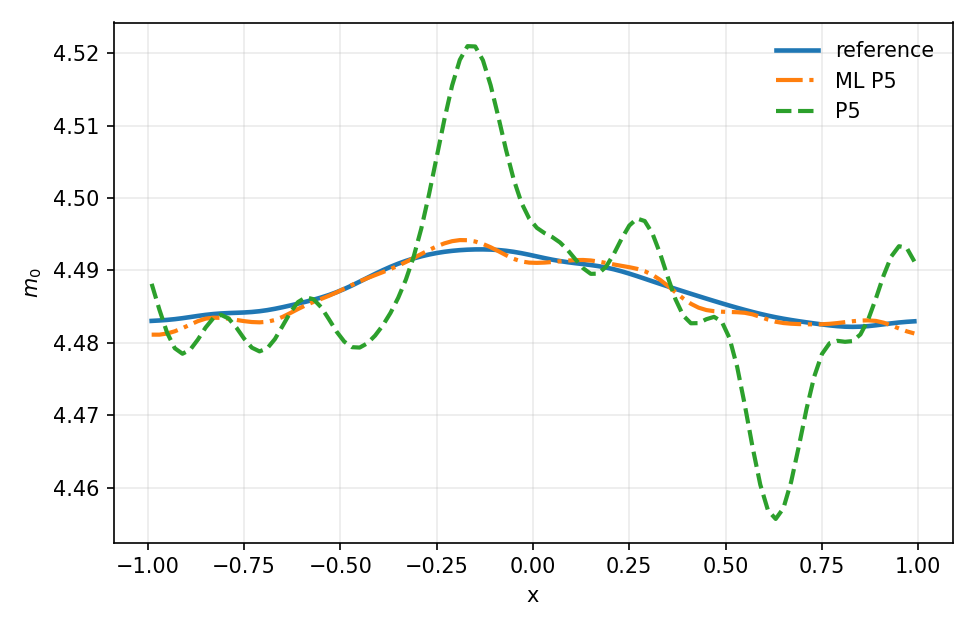}
        \caption{1D cut at $y=0$}
    \end{subfigure}
    \caption{Task 3: Comparison for the ML closure with $N=5$ and $P_5$ closure at $t=1$ and random seed $100$. The left figure shows the ML closure with $N=5$, the linear $P_5$ closure baseline, and the $P_{50}$ reference. The right figure shows the 1D cut at $y=0$. We observe that the ML solution is closer to the $P_{50}$ reference than the linear baseline in the full spatial field.}
    \label{fig:multisine-p5-testing-contour-seed100}
\end{figure}

\section{Conclusions}\label{sec:conclusion}

In this paper, we extended our previous work on ML closures for the RTE in 1D1V to the 2D2V setting. We introduced a block-diagonal symmetrizer and derived explicit algebraic conditions for the symmetrizer to ensure the hyperbolicity of the closed moment system. We then designed a new ML closure model that is guaranteed to satisfy these algebraic conditions by construction. We performed a series of numerical experiments to test the performance of the ML closure. The results show that the ML closure consistently improves over the linear $P_N$ model for a family of multi-sine initial conditions with fixed material parameters, and also generalizes to new trajectories within the same family. Future work includes testing the ML closure on a broader family of benchmark tests such as heterogeneous checkerboard media, and exploring more structural properties of the ML closure system such as rotational invariance and realizability-preserving properties. The ultimate goal is to design a closure model that can robustly improve over the linear $P_N$ model across a wide range of initial conditions and material regimes.

\section*{Acknowledgements}

The author thanks Andrew J. Christlieb at Michigan State University and Yingda Cheng at Virginia Tech for valuable discussions. 

\section*{Data availability}
No data was used for the research described in the article. The MATLAB and Python code for all numerical experiments is available from the author upon request.

\section*{CRediT authorship contribution statement}
J.H.: Conceptualization, Methodology, Software, Validation, Formal analysis, Investigation, Visualization, Writing - Original Draft, Writing - Review \& Editing.

\section*{Declaration of Generative AI and AI-assisted technologies in the
writing process}
During the preparation of this work, the author used ChatGPT to assist with numerical implementation, and drafting of the manuscript. After using this tool, the author reviewed and edited the content as needed and take full responsibility for the content of the publication.

\appendix

\section{Properties of Real Spherical Harmonics}

\begin{thm}[Real spherical harmonics form an orthonormal basis]\label{thm:real-sph-harm-orthonormal}
Let
\[
N_{l,m}:=\sqrt{\frac{2l+1}{4\pi}\frac{(l-m)!}{(l+m)!}},
\qquad l\ge 0,\quad 0\le m\le l,
\]
and define
\[
\Phi_l^0(\mu,\varphi):=N_{l,0}P_l(\mu),
\]
and, for \(1\le m\le l\),
\[
\Phi_l^{m,\mathrm c}(\mu,\varphi)
:=\sqrt{2}\,N_{l,m}P_l^m(\mu)\cos(m\varphi),
\qquad
\Phi_l^{m,\mathrm s}(\mu,\varphi)
:=\sqrt{2}\,N_{l,m}P_l^m(\mu)\sin(m\varphi).
\]
Then the family
\[
\bigl\{\Phi_l^0,\ \Phi_l^{m,\mathrm c},\ \Phi_l^{m,\mathrm s}
:\ l\ge 0,\ 1\le m\le l\bigr\}
\]
is an orthonormal basis of \(L^2(\mathbb S^2)\).
\end{thm}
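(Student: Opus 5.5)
We need to prove Theorem \ref{thm:real-sph-harm-orthonormal}, that the real spherical harmonics form an orthonormal basis of $L^2(\mathbb S^2)$. Throughout we use the measure $d\Omega = d\mu\,d\varphi$ on $[-1,1]\times[0,2\pi)$. The proof has two parts: orthonormality, then completeness.

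\textbf{Orthonormality.} The inner product of two basis functions factorizes into a $\varphi$-integral and a $\mu$-integral.

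First, the $\varphi$-integrals of $1$, $\cos(m\varphi)$ and $\sin(m'\varphi)$ against each other vanish unless the two trigonometric factors are of the same type and $m=m'$. The nonvanishing values are
\[
\int_0^{2\pi}1\,d\varphi=2\pi,\qquad \int_0^{2\pi}\cos^2(m\varphi)\,d\varphi=\int_0^{2\pi}\sin^2(m\varphi)\,d\varphi=\pi \quad (m\ge1).
\]
So distinct orders, or a cosine paired with a sine, are already orthogonal.

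Second, for the same order $m$ and the same trigonometric type, we need the classical identity for associated Legendre functions of fixed order:
\[
\int_{-1}^1 P_l^m(\mu)P_{l'}^m(\mu)\,d\mu=\frac{2}{2l+1}\frac{(l+m)!}{(l-m)!}\,\delta_{ll'}.
\]
Orthogonality for $l\ne l'$ follows because $P_l^m$ satisfies the Sturm--Liouville equation
\[
\bigl((1-\mu^2)y'\bigr)' + \Bigl(l(l+1)-\frac{m^2}{1-\mu^2}\Bigr)y=0,
\]
whose weight is $1$. The boundary terms vanish because of the factor $(1-\mu^2)$. The norm follows from the Rodrigues-type formula $P_l^m=(-1)^m(1-\mu^2)^{m/2}\frac{d^m}{d\mu^m}P_l$ together with repeated integration by parts; alternatively we cite the standard result.

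Combining the two factors gives, for $m=0$, the norm $N_{l,0}^2\cdot\frac{2}{2l+1}\cdot 2\pi=1$. For $m\ge1$ it gives $2N_{l,m}^2\cdot\frac{2}{2l+1}\frac{(l+m)!}{(l-m)!}\cdot\pi=1$. Thus the normalization constants are exactly right.

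\textbf{Completeness.} Take $f\in L^2(\mathbb S^2)$ orthogonal to every basis function; we show $f=0$.

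1. For each $m$, expand in Fourier modes in $\varphi$. Fubini gives $f_m^{c}(\mu):=\int f\cos(m\varphi)\,d\varphi\in L^2(-1,1)$, and similarly $f_m^s$ (with $f_0$ for $m=0$). The assumption says these are orthogonal to $P_l^m$ for all $l\ge m$.

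2. Show that $\{P_l^m\}_{l\ge m}$ is complete in $L^2(-1,1)$ for each fixed $m$. This is the main obstacle. We plan the following argument. Write $P_l^m=(1-\mu^2)^{m/2}q_{l-m}(\mu)$, where $q_{l-m}$ is a polynomial of degree exactly $l-m$. Then $g\perp P_l^m$ for all $l\ge m$ means $g(1-\mu^2)^{m/2}$ is orthogonal to all polynomials. Since $g(1-\mu^2)^{m/2}\in L^2\subset L^1(-1,1)$, Weierstrass density of polynomials in $C[-1,1]$ forces $g(1-\mu^2)^{m/2}=0$ a.e., hence $g=0$ a.e.

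3. Hence every Fourier coefficient $f_m^{c},f_m^{s}$ vanishes for a.e.\ $\mu$. Completeness of the trigonometric system in $L^2(0,2\pi)$, applied for a.e.\ $\mu$, then gives $f=0$ a.e.

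The measure-theoretic details of step 3 (Fubini, and that $f(\mu,\cdot)\in L^2$ for a.e.\ $\mu$) are routine.
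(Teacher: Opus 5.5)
Your proof is correct. On orthonormality it follows the paper's route: factor each inner product into a $\varphi$-integral and a $\mu$-integral, use trigonometric orthogonality to separate types and orders, then use the (associated) Legendre orthogonality relations and check that $N_{l,m}$ (with the extra $\sqrt2$ for $m\ge1$) gives unit norm.

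You are slightly more careful than the paper at this stage:
\begin{itemize}
\item You dispose of $m\neq m'$ via the trigonometric factor before invoking the fixed-order relation. The paper writes that relation for $\int P_l^mP_{l'}^{m'}\,d\mu$, which only holds for $m=m'$, though the factor $\delta_{mm'}$ makes this harmless.
\item You indicate the Sturm--Liouville reason for the fixed-order relation rather than just citing it. For $m\ge1$, say explicitly why the boundary term vanishes: $(1-\mu^2)\frac{d}{d\mu}P_l^m$ stays bounded while $P_{l'}^m\to0$ at $\pm1$. Pointing only at the factor $(1-\mu^2)$ is not quite enough, since for $m=1$ the derivative $\frac{d}{d\mu}P_l^m$ itself blows up there.
\end{itemize}

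The real difference is completeness. The paper's proof ends after orthonormality, so its claim that the family is a \emph{basis} of $L^2(\mathbb S^2)$ is never justified. Your argument supplies exactly this missing half, and it is sound:
\begin{itemize}
\item the Fourier coefficients $f_m^{c},f_m^{s}$ lie in $L^2(-1,1)$ by Cauchy--Schwarz and Fubini;
\item the factorization $P_l^m=(1-\mu^2)^{m/2}q_{l-m}$ with $\deg q_{l-m}=l-m$ means the $q_{l-m}$, $l\ge m$, span all polynomials;
\item density of polynomials, together with positivity of $(1-\mu^2)^{m/2}$ on $(-1,1)$, gives $f_m^{c}=f_m^{s}=0$ a.e.;
\item completeness of the trigonometric system, applied for $\mu$ outside a countable union of null sets, gives $f=0$ a.e.
\end{itemize}
You can skip the $L^1$/Weierstrass detour in step 2: since $g(1-\mu^2)^{m/2}\in L^2(-1,1)$ and polynomials are dense in $L^2(-1,1)$, orthogonality to all polynomials forces it to vanish directly.

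So the paper's version is shorter but only establishes orthonormality, while yours proves the full statement.
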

\begin{proof}
    We first derive the integration formula for the surface integral on the sphere with the variables \((\mu,\varphi)\). With the spherical coordinates \((\theta,\varphi)\) with $\Omega = (\sin\theta\cos\varphi,\sin\theta\sin\varphi,\cos\theta)$, the surface integral of a function \(f:\mathbb S^2\to\mathbb R\) can be written as
    \begin{equation}
    \int_{\mathbb S^2} f(\Omega)\,d\Omega = \int_0^{2\pi}\int_0^\pi f(\theta,\varphi)\sin\theta\,d\theta\,d\varphi.
    \end{equation}
    Now with the change of variable $\mu=\cos\theta$, we have $d\mu=-\sin\theta\,d\theta$, so that
    \[
    \int_{\mathbb S^2} f(\Omega)\,d\Omega
    =
    \int_0^{2\pi}\int_0^\pi f(\theta,\varphi)\sin\theta\,d\theta\,d\varphi
    =
    \int_0^{2\pi}\int_{-1}^1 f(\mu,\varphi)\,d\mu\,d\varphi.
    \]

    We now check the orthonormality in the following cases: 
    \begin{enumerate}

    \item 
    The orthogonality of (\(\Phi_l^0\), \(\Phi_{l'}^{m,\mathrm c}\)), (\(\Phi_l^0\), \(\Phi_{l'}^{m,\mathrm s}\)), and (\(\Phi_{l'}^{m,\mathrm c}\), \(\Phi_{l'}^{m,\mathrm s}\)) can be easily checked by the orthogonality of the trigonometric functions of \(\varphi\).

    \item 
    The orthonormality of $\Phi_l^0$ and $\Phi_{l'}^0$ can be checked by directly computing
    \begin{equation}
    \begin{aligned}
        \int_{\mathbb S^2}\Phi_l^0\Phi_{l'}^0\,d\Omega ={}&
        2\pi N_{l,0}N_{l',0}\int_{-1}^1 P_l(\mu)P_{l'}(\mu)\,d\mu \\
        ={}& \frac{2\pi}{4\pi}\sqrt{(2l+1)(2l'+1)}\int_{-1}^1 P_l(\mu)P_{l'}(\mu)\,d\mu \\
        ={}& \frac{2\pi}{4\pi}\sqrt{(2l+1)(2l'+1)} \frac{2}{2l+1} \delta_{ll'} = \delta_{ll'}.
    \end{aligned}
    \end{equation}
    Here we use the orthogonality relation of the Legendre polynomials:
    \begin{equation}
        \int_{-1}^1 P_l(\mu)P_{l'}(\mu)\,d\mu = \frac{2}{2l+1}\delta_{ll'}.
    \end{equation}

    \item 
    The orthonormality of \(\Phi_l^{m,\mathrm c}\) and \(\Phi_{l'}^{m',\mathrm c}\) can be checked by directly computing
    \begin{equation}
    \begin{aligned}
        \int_{\mathbb S^2}\Phi_l^{m,\mathrm c}\Phi_{l'}^{m',\mathrm c}\,d\Omega ={}&
        2N_{l,m}N_{l',m'}\Bigl(\int_{-1}^1 P_l^m(\mu)P_{l'}^{m'}(\mu)\,d\mu\Bigr)\Bigl(\int_0^{2\pi}\cos(m\varphi)\cos(m'\varphi)\,d\varphi\Bigr) \\
        ={}& 2N_{l,m}N_{l',m'} \frac{2(l+m)!}{(2l+1)(l-m)!}\delta_{ll'} \pi \delta_{mm'} \\
        ={}& \delta_{ll'}\delta_{mm'}.
    \end{aligned}
    \end{equation}
    Here we use the orthogonality relation of the associated Legendre polynomials:
    \begin{equation}
        \int_{-1}^1 P_l^m(\mu)P_{l'}^m(\mu)\,d\mu = \frac{2(l+m)!}{(2l+1)(l-m)!}\delta_{ll'}
    \end{equation}
    Similar computation gives
    \begin{equation}
    \int_{\mathbb S^2}\Phi_l^{m,\mathrm s}\Phi_{l'}^{m',\mathrm s}\,d\Omega = \delta_{ll'}\delta_{mm'}.
    \end{equation}
    The proof is complete.
\end{enumerate}
\end{proof}

\end{document}